 \DeclareMathOperator \Ell {Ell}
 \DeclareMathOperator \PSU {PSU}
\DeclareMathOperator \PSL {PSL} \DeclareMathOperator \PGL {PGL}
\DeclareMathOperator \SL {SL} \DeclareMathOperator \GL {GL}
 \DeclareMathOperator \MCG {MCG}
\DeclareMathOperator \tr  {tr } \DeclareMathOperator\Hom{Hom}
\DeclareMathOperator \Homeo{Homeo} 
\DeclareMathOperator\Imaginary{Im} \DeclareMathOperator \Kernel{Ker}
\DeclareMathOperator \Aut {Aut} \DeclareMathOperator \Out {Out}
 \DeclareMathOperator \R {R}
 \DeclareMathOperator \Conj {conj}
\newtheorem{theorem}{Theorem}[section]
\newtheorem{prop}{Proposition}[section]
\newtheorem{lem}{Lemma}[section]
\newtheorem{defin}{Definition}[section]
\newenvironment{proof}[1][Proof]{\begin{trivlist}
\item[\hskip \labelsep {\bfseries #1}]}{\end{trivlist}}
\begin{document}

%title

\title{The action of the mapping class group on representation
varieties of $\PSL(2,\mathbb{R})$. \\Case I: The one-holed torus.}

\author {Panagiota Konstantinou}

\maketitle
\begin{abstract}In this paper we consider the action of the mapping class group of a surface on
the space of homomorphisms from the fundamental group of a surface
into $\PSL(2,\mathbb{R})$. Goldman conjectured that when the surface
is closed and of genus bigger than one, the action on
non-Teichm\"uller connected components of the associated moduli
space (i.e. the space of homomorphisms modulo conjugation) is
ergodic. One approach to this question is to use sewing techniques
which requires that one considers the action on the level of
homomorphisms, and for surfaces with boundary. In this paper we
consider the case of the one-holed torus with boundary condition,
and we determine regions where the action is ergodic. Our main
result mirrors a theorem of Goldman's at the level of moduli.
\end{abstract}

\section{Introduction}\label{C:introduction} Let $G$ denote an
abstract Lie group which is isomorphic to $\PSL(2,\mathbb R)$. Let
$\Sigma$ denote the one-holed torus equipped with a basepoint and a
loop which connects the basepoint to the boundary  as in figure
\ref{fig:oneholetorus1}. The boundary component $c$  is associated
with a group element $g_c \in G$.  Given $g_c  \in G$, we write
$\Sigma_{g_c}$ to indicate that we impose the boundary condition
$g_c$. Let $\tilde{G}$ be the covering group of $G$. Since
$\pi_1(\Sigma_{g_c})$ is a free group on two generators, $\alpha, \
\beta$, we can identify
$$\Hom \bigl(\pi_1(\Sigma_{g_c}), G\bigl)=\{(g_{\alpha}, g_{\beta})
\in G \times G : [g_{\alpha}, g_{\beta}]=g_c \}.$$ Let  $R_1$ denote
the lift of the commutator mapping:
$$R_1: \ \begin{array}{rcccl}
\Hom\bigl(\pi_1(\Sigma_{g_c}),G\bigl) &\rightarrow &G \times
G&\rightarrow &\tilde{G} \\
 g &\mapsto& (g_{\alpha},
g_{\beta})&\mapsto&\tilde{g_c}
\end{array}$$
We define $\Gamma_{\Sigma}$ the subgroup of the mapping class group
that is generated by the two Dehn twists that do not affect the
basepoint and the loop that connects the basepoint to the boundary.
The purpose of this paper is to prove the following:

Let $\tilde{g_c} \in \tilde G$ a lift of $g_c$ to the universal
covering of $G$. If $g_c'\ \in \SL(2,\mathbb{R})$ the projection of
$\tilde{g_c}$ onto $\SL(2,\mathbb{R})$ and $t=\tr g_c'$, then

\begin{itemize}
\item  For $t<2$ the group $\Gamma_{\Sigma}$ acts properly discontinuously on $R_1^{-1}(\tilde{g}_c)$.
\item For $2<t<18$ the group $\Gamma_{\Sigma}$ acts ergodically on  $R_1^{-1}(\tilde{g}_c)$ for almost every $\tilde g_c$.
\item  For $t\geq 18$   there is an open subset  $\Omega_{\tilde{g}_c} \subset R_1^{-1}(\tilde{g}_c )$ such that the group $\Gamma_{\Sigma}$ acts properly discontinuously. On the complement of this subset, the action is ergodic for almost every $\tilde{g_c}$.
\end{itemize}

The result is an analogue to Goldman's result on the level of moduli
\cite{goldman03}. The proper discontinuity results follow directly
from the corresponding theorem of Goldman, \cite{goldman03}. Proving
ergodicity though becomes a harder question since Goldman is proving
ergodicity for an action on two dimensional spaces, whereas we
consider an action on three dimensional spaces. To prove ergodicity
we have to adjust the $L^2$ methods used in \cite{PX} for the
compact case in this noncompact settings. We use a combination of a
result of Goldman's (Theorem 5.2.1 in \cite{goldman03}) to reduce to
a pair of elliptic elements and the infinitesimal transitivity
method from \cite{PX}.

There are a lot of known results and also open questions on the
action of the mapping class group on representation varieties of
different groups (see Goldman's survey \cite{Goldman06}). Goldman
determined exactly the action of the mapping class group on the
moduli space of $\Hom(\pi_1\Sigma, \PSL(2,\mathbb{R}))$ in the case
where $\Sigma$ is the one-holed torus \cite{goldman03}. The next
step is to investigate the action of the mapping class group on
spaces of homomorphisms with $\Sigma$ a general compact surface. One
approach is to use the sewing techniques that were developed by
Pickrell and Xia in \cite{PX}. The idea is to obtain results for
surfaces with boundary starting with the one-holed torus and then
use the sewing lemma to obtain results for any surface. The sewing
lemma requires that one considers the space of homomorphisms and not
the moduli space. In addition the mapping class group that we
consider has to be restricted to a subgroup that does not effect the
basepoint or the loop that connects the basepoint to the boundary.
Given those restrictions we manage to prove that in the case of the
one-holed torus the result on the action of the mapping class group
on the space of homomorphisms does not differ from the action on the
corresponding moduli space.

% The result that is proved in this paper is a technical refinement
%of Goldman's result. The main difference is that  we restrict the
%mapping class group on a subgroup that does not effect the basepoint
%or the loop that connects the basepoint to the boundary and we
%consider the space of $\Hom\bigl(\pi_1(\Sigma),G\bigl)$ and not the
%corresponding moduli space. This will allow as to use the sewing
%lemma 1.3 in \cite{PX} to generalize the results to higher genus
%surfaces.

 The structure of this paper is the following. In
section \ref{C:background} we give some background and the notation
that we will follow in the paper. In section
\ref{C:theoneholedtorus} we give the definition and the structure of
the one-holed torus. In section \ref{C:mappingclassgroup} we define
the mapping class group of the one-holed torus. For general facts on
one-holed tori, and the mapping class group we will refer to
Goldman's paper \cite{goldman03}. In section
\ref{C:actionofmappingclassgroup} we state the theorem and explain
the structure of the proof.  In section
\ref{C:infinitesimaltransitivity} we prove the infinitesimal
transitivity result which is crucial for the proof of ergodicity.
Finally in section \ref{C:proofoftheorem} we give the proof of the
theorem.
\subsection*{Acknowledgements}
I am grateful to my PhD thesis advisor, Doug Pickrell,  for
carefully reading this paper and giving many useful suggestions. I
also thank William Goldman for responding to number of questions
during this work.

\section{Background and notation}\label{C:background} Throughout this paper,
unless we state otherwise, $G$ will denote an abstract Lie group
which is isomorphic to $\PSL(2,\mathbb R)$. The elements of the
group $G$ fall into three classes: elliptic, parabolic, and
hyperbolic. If $g\in G$ is elliptic, it is useful to represent $G$
as the group $\PSU(1,1)$, the group of holomorphic automorphisms of
the unit disk, $\Delta\subset \mathbb C$; in this case $g$ is
conjugate to a rotation of the disk. Note that $|\tr g|<2 $. If
$g\in G$ is parabolic or hyperbolic, it is useful to represent $G$
as $\PSL(2,\mathbb R)$, the group of holomorphic automorphisms of
the upper half plane $\mathbb{H}^2\subset \mathbb C$; in this case
$g$ is conjugate to a translation or dilation. For parabolic
elements we have that $|\tr g|$=2 and for hyperbolic elements $|\tr
g|>2$.

  The fundamental group of $G$ is $\mathbb Z$, so that the universal covering map induces an exact sequence of groups
\begin{equation} \label{shortexactsequence}0\to \mathbb Z\to\tilde {G}\to G\to 0.\end{equation}
%We will construct an explicit model for the universal covering in
%section 2.3.
(For more details on the conjugacy classes of $G$ and its universal
covering see  chapter 2 in \cite{mydissertation})

Let $\Sigma$ denote a closed oriented surface with fixed basepoint.
Let $\gamma$ denote the genus of $\Sigma$. The space of
homomorphisms $\Hom\bigl(\pi_1(\Sigma) ,G\bigl)$ is called the
representation variety associated to $\Sigma$ and $G$. If we fix a
marking of $\Sigma$, i.e. a choice of standard generators of
$\pi_1(\Sigma)$, $\alpha_1$, $\beta_1$,..., $\alpha_{\gamma}$,
$\beta_{ \gamma}$, then we can identify $\Hom\bigl(\pi_1(\Sigma)
,G\bigl)$ with the set
$$\{(g_{\alpha_1},...,g_{\beta_{\gamma}})\in G^{2\gamma}: [g_{\alpha_1}, g_{\beta_1}]...[g_{\alpha_{\gamma}},g_{\beta_{\gamma}}]=1\}.$$
This is because the group $\pi_1(\Sigma)$ is defined by the single
relation $[\alpha_1,\beta_1]...[\alpha_{\gamma}, \beta_{\gamma}]=1$.
%This clearly displays $\Hom\bigl(\pi_1(\Sigma) ,G\bigl)$ as an
%affine algebraic variety.
We let $H^1(\Sigma ,G)$ denote the space $\Hom\bigl(\pi_1(\Sigma)
,G\bigl)$ modulo the action of conjugation by $G$:
$$G\times \Hom \bigl(\pi_1(\Sigma) ,G\bigl)\to \Hom\bigl(\pi_1(\Sigma) ,G\bigl):(g,\phi )\to
\Conj(g)\circ\phi ,$$ where $\Conj(g)$ denotes the inner
automorphism of conjugation by $g$. This space does not depend upon
the choice of basepoint. Goldman and Hitchin have shown that the
representation variety $\Hom\bigl(\pi_1(\Sigma) ,G\bigl)$ consists
of finitely many connected components bounded in magnitude by
$|\chi(\Sigma)|$, where $\chi(\Sigma)$ is the Euler characteristic
of $\Sigma$. On a geometric point view, the connected component that
corresponds to the extreme value $|\chi(\Sigma)|$ is isomorphic to
the set of all possible ways of realizing $\Sigma$ as a quotient of
$\mathbb{H}^2$, and therefore it is all the possible universal
coverings with marking modulo isomorphism. This is the
Teichum\"uller space of $\Sigma$. (See chapter 4 in
\cite{mydissertation} for more extensive discussion on the component
that corresponds to the value $|\chi (\Sigma)|$). The variety
$\Hom\bigl(\pi_1(\Sigma),G\bigl)$ has a canonical
$\Gamma_{\Sigma}$-invariant measure class, the Lebesque class of the
set of nonsingular points.
%The total space $H^1(\Sigma ,G)$ is not a
%variety, because for example, there exist points in this quotient
%space which are not closed. (However  as we will remark below, these
%bad points are confined to one component).

Let $\MCG(\Sigma)=\pi_0\bigl(\Aut(\Sigma )\bigl)$ denote  the
mapping class group of $\Sigma$. This group acts naturally on
$\Hom\bigl(\pi_1(\Sigma) ,G\bigl)$; because $\Sigma$ is connected,
the mapping class group can be identified with the isotopy classes
(or homotopy classes) of homeomorphisms which fix our preferred
basepoint; with this understood, the action is given by
$$\MCG(\Sigma) \times \Hom\bigl(\pi_1(\Sigma) ,G\bigl)\to \Hom\bigl(\pi_1(\Sigma) ,G \bigl):([h],\phi )\to\phi\circ h_{*},$$
where $h_{*}$ is the automorphism of $\pi_1(\Sigma)$ induced by the
homeomorphism $h$ (which fixes our basepoint), and which does not
depend upon the choice of $h\in [h]$.

An interesting question is to understand the action of the mapping
class group on $\Hom\bigl(\pi_1(\Sigma) ,G\bigl)$.  Goldman
conjectured that when the surface is closed and of genus bigger than
one, the action on non-Teichm\"uller components of the associated
moduli space  is ergodic (conjecture 3.1 in \cite{Goldman06}). One
approach to this question is to use sewing techniques as in
\cite{PX}. The sewing method as developed by Pickrell and Xia (see
Sewing Lemma 1.3 p. 341) requires that one considers the action on
the level of homomorphisms and with surfaces with boundary and that
you start with the one-holed torus. In this paper we consider the
case of the one-holed torus with boundary condition, and we
determine the regions where the action is ergodic.

\section{The one-holed torus}\label{C:theoneholedtorus}
We consider a compact connected orientable surface of genus one with
one boundary component. Since attaching a disk to this surface
yields a torus, we refer to it as the {\it one-holed torus}.
\begin{figure}
 \[
\includegraphics {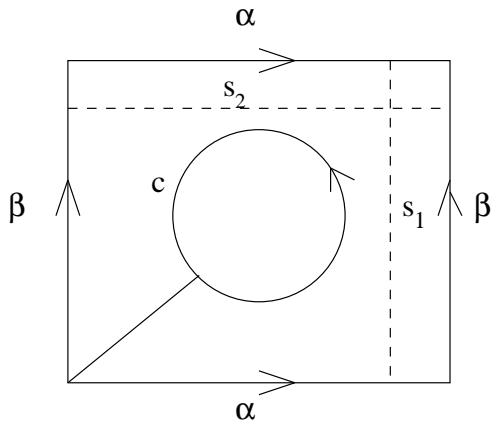}
\]
\caption{\label{fig:oneholetorus1} The one-holed torus, with group
element boundary condition}
\end{figure}

Let $\Sigma$ denote the one-holed torus, equipped with a basepoint
and a loop which  connects the basepoint to the boundary component
as in figure \ref{fig:oneholetorus1}. Given $g_c\in G$, we write
$\Sigma_{g_c}$ to indicate that we impose the boundary condition
$g_c$. Since $\Sigma$ can be continuously deformed to the figure
eight, given a choice of standard generators of $\pi_1(\Sigma)$,
$\alpha, \beta$ as in the figure \ref{fig:oneholetorus1}, $
\pi_1(\Sigma;x_0)$  admits the geometric presentation
$$\pi_1(\Sigma;x_0)=\langle \alpha, \beta, c : [\alpha,\beta]=c
\rangle$$ where $c$ corresponds to the generator of $\pi_1 (
\partial \Sigma)$. Then we can identify
$$\Hom \bigl(\pi_1(\Sigma_{g_c}), G\bigl)=\{(g_{\alpha}, g_{\beta})
\in G \times G : [g_{\alpha}, g_{\beta}]=g_c \}.$$ We define the
lifted commutator mapping  $$R_1 \ :\ G\times G\rightarrow \tilde{G}
\ :\ (g,h)\rightarrow [\tilde{g},\tilde{h}].$$ In terms of  $R_1$,

$$R_1: \ \begin{array}{rcccl}
\Hom\bigl(\pi_1(\Sigma_{g_c}),G\bigl) &\rightarrow &G \times G&\rightarrow &\tilde{G} \\
g &\mapsto& (g_{\alpha}, g_{\beta})&\mapsto&\tilde{g_c}
\end{array}$$
we have the decomposition into connected components
\begin{equation}\Hom\bigl(\pi_1(\Sigma_{g_c}),G\bigl)=\bigsqcup_{\tilde {g}_c } R_1^{-1}(\tilde {g_c})\end{equation}
where $\tilde {g}_c\in\tilde {G}$ is a lift of $g_c\in G$.
%Note that
%according to the first part of proposition \ref{Rp_proposition}, and
%the figure \ref{fig:SU(1,1)}, if $g_c$ is an elliptic element, there
%is only one choice for $\tilde{g}_c$, if $g_c$ is a hyperbolic
%element, there three choices for $\tilde{g}_c$ and when $g_c$ is
%parabolic there are two.

\section{The mapping class  group of the one-holed
torus}\label{C:mappingclassgroup}

Since $\pi_1(\Sigma;x_0)$ is freely generated by $\alpha$ and
$\beta$, the first homology group $H_1 (\Sigma, \mathbb{Z})$ is
isomorphic to $\mathbb{Z} \alpha \oplus \mathbb{Z} \beta$. The
action on the homology $H_1 (\Sigma, \mathbb{Z})$ defines a
homomorphism
$$h : \Out \bigl(\pi_1(\Sigma;x_0)\bigl) \longrightarrow \GL(2, \mathbb{Z}).$$
 Nielsen proved that the map $h$ defined above  is an isomorphism in the case of the one-holed torus \cite{nielsen}.  (See \cite{LS}, Proposition 4.5 or Magnus-Karrass-Solitar \cite{MKS}, Section 3.5, Corollary N4). This property does not generalize to other hyperbolic surfaces with boundary.

In this paper we are going to consider only a subgroup of the
$\MCG(\Sigma)$, that fixes the basepoint and the loop which joins
the basepoint and the boundary of $\Sigma$. This is because we would
like to use sewing techniques to generalized our results to higher
genus surfaces. We define two elements of the $\MCG(\Sigma)$.
 The {\it Dehn Twist} about $\alpha$ is the automorphism $\tau_{\alpha} \in \Aut \bigl(\pi_1(\Sigma;x_0)\bigl)$:
\begin{eqnarray}
\alpha &\rightarrow &\alpha\nonumber\\
\beta &\rightarrow & \beta \alpha\nonumber
\end{eqnarray}
and it corresponds to $\begin{pmatrix}
 1&0 \\
 1&1
 \end{pmatrix}$ $\in \SL(2, \mathbb{Z})$.

 The {\it Dehn Twist} about $\beta$ is the automorphism $\tau_{\beta} \in\Aut \bigl(\pi_1(\Sigma;x_0)\bigl)$:
\begin{eqnarray}
\alpha &\rightarrow &\alpha \beta\nonumber\\
\beta &\rightarrow & \beta\nonumber
\end{eqnarray} and it corresponds to $\begin{pmatrix}
 1&1 \\
 0&1
 \end{pmatrix}$ $\in \SL(2, \mathbb{Z})$.
The two Dehn twists generate a subgroup of $\MCG(\Sigma)$ isomorphic
to $\SL(2, \mathbb{Z})$ (See \cite{rankin} page 11).
%We are going to denote this group by $\Gamma_{\Sigma}$.

%Another element of the mapping class group is the automorphism:
%\begin{eqnarray}
%\alpha &\rightarrow &\beta\nonumber\\
%\beta &\rightarrow & \alpha^{-1}\nonumber
%\end{eqnarray} and it corresponds to $\begin{pmatrix}
% 0&1 \\
% -1&0
% \end{pmatrix}$ $\in \SL(2, \mathbb{Z})$.
 %\begin{figure}[h!]
%\begin{center}
%\includegraphics[width=6.5cm,angle=-90]{../orals/dehntwist.ps}
%\end{center}
%\caption{\label{fig:dehntwist} The Dehn Twist (\cite{birman} page
%167). }
%\end{figure}
As the $\MCG(\Sigma)$ acts on $\pi_1(\Sigma)$ , it would also act on
the space of $\Hom( \pi_1\Sigma,G)$ and as a consequence on $G\times
G$. Define $\Gamma_{\Sigma}$ to be the group generated by the
transformations $T_j:G\times G\to G\times G$ given by
\begin{equation}\label{twists}T_1(g,h)=(gh^{-1},h),\quad T_2(g,h)=(g,hg^{-1}).\end{equation}
These transformations arise from twists along the curves $s_1$ and
$s_2$ indicated in the  figure \ref{fig:oneholetorus1} as defined
above. They are volume-preserving (with respect to Haar measure),
they commute with conjugation by $G$, and they commute with the map
$R_1$.  The action of $\Gamma_{\Sigma}$ (the orientation preserving
mapping class group of $\Sigma$) restricts to the action of
$\pi_0\bigl(\Aut(\Sigma_{g_c})\bigl)$   on $R_1^{-1}(\tilde
{g}_c)\subset \Hom\bigl(\pi_1(\Sigma_{g_c}),G\bigl)$, for each
boundary condition. (For general facts about the mapping class group
see section 1 in \cite{goldman03}.)

\section{The action of the mapping class
group}\label{C:actionofmappingclassgroup}

\begin{theorem}\label{actionofMCG}
Let $\Sigma$ be the one-holed torus with additional structure
defined as in section \ref{C:theoneholedtorus}, and
$\Gamma_{\Sigma}$ the group generated by the two Dehn twists $T_1$
and $T_2$. Let $g_c$ be the group element that corresponds to the
boundary component,  and let $\tilde {g_c} \in \tilde{G}$ a lift of
$g_c$ to the universal covering of $G$. Let $g_c' \in
\SL(2,\mathbb{R})$ the projection of $\tilde{g_c}$ onto
$\SL(2,\mathbb{R})$ and $t= \tr g_c'$. Then
\begin{itemize}
\item  For $t<2$ the group $\Gamma_{\Sigma}$ acts properly discontinuously on $R_1^{-1}(\tilde{g}_c)$.
\item For $2<t<18$ the group $\Gamma_{\Sigma}$ acts ergodically on  $R_1^{-1}(\tilde{g}_c)$ for almost every $\tilde g_c$.
\item  For $t\geq 18$   there is an open subset  $\Omega_{\tilde{g}_c} \subset R_1^{-1}(\tilde{g}_c )$ such that the group $\Gamma_{\Sigma}$ acts properly discontinuously. On the complement of this subset, the action is ergodic for almost every $\tilde{g_c}$.
\end{itemize}

\end{theorem}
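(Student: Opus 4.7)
The plan is to prove the three items separately, with the proper-discontinuity assertions reducing to Goldman's moduli-level results in \cite{goldman03} and the ergodicity assertions carrying the bulk of the new work.

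For proper discontinuity in the case $t<2$, and on the open set $\Omega_{\tilde g_c}$ in the case $t\geq 18$, I would use that $\Gamma_\Sigma$ commutes with the residual conjugation action of the centralizer $Z_G(g_c)$ on $R_1^{-1}(\tilde g_c)$, and descends to the action on the two-dimensional moduli slice $R_1^{-1}(\tilde g_c)/Z_G(g_c)$, for which Goldman has already proved proper discontinuity. Since $Z_G(g_c)$ is one-dimensional abelian and acts properly on the irreducible locus, a transversal argument lifts proper discontinuity back to $R_1^{-1}(\tilde g_c)$ itself.

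For ergodicity in the case $2<t<18$, I would adapt the $L^2$ infinitesimal-transitivity technique of \cite{PX}. Let $f\in L^2(R_1^{-1}(\tilde g_c))$ be $\Gamma_\Sigma$-invariant. Theorem 5.2.1 in \cite{goldman03} guarantees that for almost every admissible $\tilde g_c$ the $\Gamma_\Sigma$-orbit of almost every $\phi=(g_\alpha, g_\beta)$ meets the locus where both $g_\alpha$ and $g_\beta$ are elliptic; fix such a basepoint. Then $T_2^n(g_\alpha,g_\beta)=(g_\alpha, g_\beta g_\alpha^{-n})$ traces out the cyclic subgroup generated by $g_\alpha$ in the second factor, and for generic irrational rotation angle this accumulates on the full elliptic one-parameter subgroup $\exp(\mathbb R X_\alpha)$ through $g_\alpha$. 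Closing the $\Gamma_\Sigma$-action in the weak operator topology on $L^2$ yields a continuous one-parameter flow $\Phi^\alpha_s(g_\alpha,g_\beta)=(g_\alpha, g_\beta e^{-sX_\alpha})$ that preserves $f$; symmetrically, the flow $\Phi^\beta_s(g_\alpha,g_\beta)=(g_\alpha e^{-sX_\beta}, g_\beta)$ obtained from iterations of $T_1$ also preserves $f$. A direct computation shows that both flows preserve the commutator, so they live on $R_1^{-1}(\tilde g_c)$.

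The heart of the argument, and the main obstacle, is then infinitesimal transitivity on the three-dimensional variety $R_1^{-1}(\tilde g_c)$. The generators $V_\alpha,V_\beta$ of $\Phi^\alpha_s, \Phi^\beta_s$ produce only two independent tangent directions, so the missing third must come from a Lie bracket $[V_\alpha,V_\beta]$ or an analogous higher-order combination; this nonlinear calculation is genuinely delicate because $G$ is noncompact, in contrast to the $\SU(2)$ setting of \cite{PX}, and one must also verify that $(g_\alpha,g_\beta)$ does not sit in a proper Lie subgroup. The computation, to be carried out in section \ref{C:infinitesimaltransitivity}, will reduce spanning to the non-vanishing of an explicit Jacobian whose zero locus has measure zero, forcing $f$ to be locally constant and hence constant on each connected component of $R_1^{-1}(\tilde g_c)$. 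For $t\geq 18$, the same argument applies verbatim on the complement of $\Omega_{\tilde g_c}$, since Goldman's reduction still provides elliptic pairs off of that open set.
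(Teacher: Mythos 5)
Your overall architecture matches the paper's: proper discontinuity is inherited from Goldman's moduli-level theorem via the quotient to the character variety (this is the content of Lemma \ref{properaction}), and ergodicity for $2<t<18$, and off $\Omega_{\tilde g_c}$ for $t\geq 18$, is obtained by first moving to elliptic pairs and then running a Pickrell--Xia style infinitesimal-transitivity argument. However, there are two concrete gaps in the ergodicity part. First, the $L^2$ framework you propose does not survive the noncompactness of $G$: the fiber $R_1^{-1}(\tilde g_c)$ carries only a Lebesgue measure \emph{class} of infinite total mass, so the indicator function of an invariant set need not lie in $L^2$, and if you pass to an equivalent finite measure the twists are no longer measure-preserving, so there is no unitary action whose weak-operator closure you can take. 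The step you actually need --- that invariance under $T_2^n$ for all $n$ forces invariance under every element in the closure of the cyclic group generated by $g_\alpha$ --- requires a function space on which right translation acts continuously; the paper obtains this from Moore's space $U(X,M)$ of measurable functions with the topology of convergence in measure (Lemma \ref{lemmaT_j,A_j}), precisely because, as the paper notes, both $L^2$ and $L^\infty$ fail here.

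Second, Goldman's Theorem 5.2.1 is a statement about $\Aut(\kappa)$ acting on the trace coordinates $(x,y,z)$, and it only delivers \emph{one} coordinate in $(-2,2)$; it does not directly say that the $\Gamma_\Sigma$-orbit of $(g_\alpha,g_\beta)$ in $\Hom\bigl(\pi_1(\Sigma_{g_c}),G\bigr)$ meets $\Ell\times\Ell$. You need (a) the observation that the generators of $\Aut(\kappa)$ beyond $\tilde{\chi}(\Gamma_{\Sigma})\cong\PSL(2,\mathbb Z)$ --- the sign-change automorphisms $\sigma_i$ and the reflection $Q$ --- only alter signs of traces, so the element achieving $|x|<2$ may be taken inside $\Gamma_\Sigma$ (this is Lemma \ref{lemma2<t<18}); and (b) a separate step making the second entry elliptic. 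Your $T_2^n$ accumulation argument supplies (b) for non-torsion $g_\alpha$, which suffices almost everywhere and is essentially the paper's $A_2^{-n}(a)$ trick with $a(g')$ an infinite-order rotation commuting with $g'$; but (a) is simply missing from your write-up, and attributing ``both entries elliptic'' directly to Theorem 5.2.1 is incorrect. Finally, the bracket computation you defer is indeed where the third tangent direction comes from, but note that it requires the full function-valued Lie algebra $\mathfrak a$ (vector fields $\bigl(y(h),x(g)\bigr)$ with $x,y$ varying over maps $G\to\mathfrak g$), since the bracket of two fixed tangent vectors at a point is not defined; this is carried out in Lemma \ref{lemma:com_of_vf} and Proposition \ref{inf_transitivity}.
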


The open subset $\Omega_{\tilde{g}_c}$ is described in
\cite{goldman03} (section 5)  indirectly by giving a fundamental
domain, (also see section \ref{C:proofoftheorem}).  For the proof of
the ergodic part, we are going to adjust the  method that was
discovered in \cite{PX} for proving the ergodicity of mapping class
group actions on $\Hom \bigl(\pi_1 (\Sigma), K\bigl )$, where $K$ is
a compact group. The basic idea is that $\Gamma_{\Sigma}$-ergodicity
for almost every boundary condition, is equivalent to a question of
$\mathcal{G}$-ergodicity on orbits, where $\mathcal{G}$ is a {\it
continuous} group of volume-preserving transformations. This
$\mathcal{G}$ transitivity is locally reducible to a question about
infinitesimal transitivity (see section
\ref{C:infinitesimaltransitivity}). This method though, can be used
only on elliptic elements, so we modify Theorem 5.2.1 in
\cite{goldman03}   to show that in certain cases we can act by
$\Gamma_{\Sigma}$ and $\mathcal{G}$ and change pairs of elements in
$G\times G$ to pairs of elliptic elements (Lemma \ref{lemma2<t<18})
and we  obtain global transitivity. For the proper  discontinuity we
base our results on Goldman's theorem \cite{goldman03}. For the
convenience of the reader we recall his setting.

% we start the proof of the theorem, we state and prove the two
%lemmas that are necessary for us in order to use Goldman's results.
%For convenience we recall his setting. (See Appendix \ref{goldman}
%for a synopsis of his results).

 He parameterizes the space $\Hom \bigl( \pi_1(\Sigma), \SL(2,\mathbb{C})\bigl)//\SL(2,\mathbb{C})$ by the traces $x, y, $ and $z$ of the generators $g$, $h$ and $g h$ respectively. In terms of these coordinates, the trace of the commutator $[g,h]$ is given by the polynomial $$\tr[g, h]= \kappa (x,y,z)=x^2+y^2+z^2-xyz-2.$$
Recall the definition of the map $\chi$.
$$\begin{array}{lrll}\chi :& \Hom\bigl(\pi_1(\Sigma), \SL(2,\mathbb{C})\bigl)//\SL(2,\mathbb{C})& \longrightarrow& \mathbb{C}^3\\ & (g,h)& \mapsto& (x,y,z)=(\tr g,\tr h, \tr gh).\end{array}$$
$\chi$ is an equivariant map:
$$\pi_0\bigl(\Homeo(\Sigma)\bigl)\times \Hom\bigl(\pi_1(\Sigma),
\SL(2,\mathbb{C})\bigl)// \SL(2,\mathbb{C}) \longrightarrow
\Aut(\kappa)\times \mathbb{C}^3.$$ Notice that the map
$\tilde{\chi}$ below has kernel the subgroup generated by the
elliptic involution $E=\left(\begin{smallmatrix}
-1&0\\0&-1\end{smallmatrix}\right)$: $$ 0\longrightarrow
\mathbb{Z}_2E\longrightarrow
\pi_0(\Homeo(\Sigma))\xrightarrow{\tilde{\chi}} \Aut(\kappa)$$ and
therefore
$\tilde{\chi}\left(\pi_0\bigl(\Homeo(\Sigma)\bigl)\right)\cong\PGL(2,\mathbb{Z})$
and  $\tilde{\chi}(\Gamma_{\Sigma})\cong\PSL(2,\mathbb{Z})$.

In this setting Goldman proves the main theorem in \cite{goldman03}
about the action of the group of automorphisms of the polynomial
$\kappa$, $\Aut (\kappa)$ on the space
$$\kappa^{-1}(t)\cap \mathbb{R}^3=\{(x,y,z) \in \mathbb{R}^3 \text{
such that } \kappa(x,y,z)=t \}.$$

In the cases where the action of $\Aut(\kappa)$ on $k^{-1}(t)\cap
\mathbb{R}^3$ in Goldman's setting is properly discontinuous, it
follows directly that the action of $\Gamma_{\Sigma}$   on
$\R_1^{-1}(\tilde{g}_c)$ is properly discontinuous. (Lemma
\ref{properaction}).
%When the first case occurs, it is possible- by applying sufficiently large times a Dehn twist- to change the second coordinate to $y \in (-2,2)$.  Based on this he  proves that $\Aut(\kappa)$ will act ergodically on $\kappa^{-1}(t)$  if $$\Aut (\kappa) \cdot \left(\kappa^{-1}(t)\cap \bigl((-2,2)\times(-2,2)\times\mathbb{R}\bigl)\right) =\kappa^{-1}(t)$$ and it would act properly discontinuously if  $$\Aut (\kappa) \cdot \bigl(\kappa^{-1}(t)\cap \bigl((-\infty,-2]\times(-\infty,-2]\times(-\infty,-2]\bigl)\bigl) =\kappa^{-1}(t)$$
%A simple calculation shows that when $x<-2$, $y<-2$ and $z<-2$, $\kappa(x,y,z)>18$, so if we start with the assumption that $2<\kappa(x,y,z)<18$, we can conclude  that for the triple $(x,y,z) \in \kappa^{-1}(t) $ we can find a $\gamma \in \Aut(\kappa)$ such that $(-2,2)\times (-2,2) \times \mathbb{R}$,  and therefore the group $\Aut(\kappa)$ acts ergodically on $\kappa^{-1}(t)$ for every $ 2\ < t < 18$. I THINK THIS PARAGRAPH IS UNESSESSARY

\begin{lem}\label{properaction}  Fix $\tilde g_c \in \tilde{G}$ that projects to  $g_c' \in \SL (2,\mathbb{R})$ and let $t=\tr (g_c')$.
 If $\Aut(\kappa)$ acts properly discontinuously on   $\kappa^{-1}(t)\cap \mathbb{R}^3$, then  $\Gamma_{\Sigma}$ acts properly discontinuously on  $R_1^{-1}(\tilde{g_c})$.

\end{lem}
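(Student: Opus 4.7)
The plan is to transfer proper discontinuity from the trace variety $\kappa^{-1}(t)\cap\mathbb R^3$ up to $R_1^{-1}(\tilde g_c)$ via the equivariant trace map $\chi$, absorbing the remaining ambiguity in the finite kernel of $\tilde\chi|_{\Gamma_\Sigma}$. First I would verify that $\chi:(g_\alpha,g_\beta)\mapsto(\tr g_\alpha,\tr g_\beta,\tr g_\alpha g_\beta)$ restricts to a continuous, $\Gamma_\Sigma$-equivariant map
$$\chi:\ R_1^{-1}(\tilde g_c)\longrightarrow\kappa^{-1}(t)\cap\mathbb R^3,$$
where $\Gamma_\Sigma$ acts on the target through $\tilde\chi(\Gamma_\Sigma)\cong\PSL(2,\mathbb Z)\subset\Aut(\kappa)$. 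The subtle point is that traces of $\PSL(2,\mathbb R)$-elements are a priori only defined up to sign; however, the chosen lift $\tilde g_c$ together with the covering sequence $0\to\mathbb Z\to\tilde G\to G\to 0$ pins down a coherent $\SL(2,\mathbb R)$-lift of each $(g_\alpha,g_\beta)\in R_1^{-1}(\tilde g_c)$, so the triple $(x,y,z)\in\mathbb R^3$ is well-defined and satisfies $\kappa(x,y,z)=t$ by the commutator trace identity.

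The heart of the argument is then a short equivariance computation. Let $K\subset R_1^{-1}(\tilde g_c)$ be compact. Its image $\chi(K)$ is compact in $\kappa^{-1}(t)\cap\mathbb R^3$, so by hypothesis the set
$$F:=\{\sigma\in\Aut(\kappa):\sigma\,\chi(K)\cap\chi(K)\neq\emptyset\}$$
is finite. If $\gamma\in\Gamma_\Sigma$ satisfies $\gamma K\cap K\neq\emptyset$, then $\tilde\chi(\gamma)\,\chi(K)\cap\chi(K)\neq\emptyset$ by equivariance, so $\tilde\chi(\gamma)\in F$. Hence $\{\gamma\in\Gamma_\Sigma:\gamma K\cap K\neq\emptyset\}\subseteq (\tilde\chi|_{\Gamma_\Sigma})^{-1}(F)$.

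Using the calculation recalled in the excerpt --- $\Gamma_\Sigma\cong\SL(2,\mathbb Z)$ by Nielsen's theorem, $\tilde\chi(\Gamma_\Sigma)\cong\PSL(2,\mathbb Z)$, and the kernel of $\tilde\chi$ on $\pi_0(\Homeo(\Sigma))$ is generated by the elliptic involution $E$ --- one sees that the kernel of $\tilde\chi|_{\Gamma_\Sigma}$ has order at most $2$. Thus $(\tilde\chi|_{\Gamma_\Sigma})^{-1}(F)$ is a finite union of finite cosets, hence finite, so the action of $\Gamma_\Sigma$ on $R_1^{-1}(\tilde g_c)$ is properly discontinuous.

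The main obstacle I anticipate is the bookkeeping in the first step: reconciling Goldman's $\SL(2,\mathbb C)$-character trace coordinates with our $\PSL(2,\mathbb R)$-setting, and verifying that the choice of lift $\tilde g_c$ determines sign conventions in a way that makes $\chi$ genuinely $\Gamma_\Sigma$-equivariant with respect to the $\PSL(2,\mathbb Z)$-action on the target. Once this identification is in place, the proper discontinuity assertion follows by the formal argument above.
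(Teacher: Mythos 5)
Your overall strategy is the same as the paper's: transfer proper discontinuity from the $\Aut(\kappa)$-action on $\kappa^{-1}(t)\cap\mathbb{R}^3$ back through the equivariant trace map. The paper does this in two stages (first pulling back to $\pi_0(\Homeo(\Sigma))$ acting on $\Hom\bigl(\pi_1(\Sigma),\SL(2,\mathbb{R})\bigr)//\SL(2,\mathbb{R})$, then through the quotient map $q$ to the representation variety itself), and its written proof is much terser than yours; your single compact-set computation together with the finiteness of the kernel of $\tilde{\chi}|_{\Gamma_\Sigma}$ is exactly the content that both stages require, so your version is if anything more complete than the one in the paper.

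One step as you state it would fail: the lift $\tilde g_c$ of the boundary condition does \emph{not} pin down $\SL(2,\mathbb{R})$-lifts of $g_\alpha$ and $g_\beta$ individually, because replacing $g'_\alpha$ by $-g'_\alpha$ (or $g'_\beta$ by $-g'_\beta$) leaves the commutator, and hence its lift to $\tilde G$, unchanged. Consequently the triple $(x,y,z)$ is well defined only up to the sign-change group $\mathbb{Z}_2\sigma_1\oplus\mathbb{Z}_2\sigma_2$, and a globally continuous choice of signs need not exist on $R_1^{-1}(\tilde g_c)$. This does not damage the argument: the sign-change group is a finite normal subgroup of $\Aut(\kappa)=\PGL(2,\mathbb{Z})\ltimes\bigl(\mathbb{Z}_2\sigma_1\oplus\mathbb{Z}_2\sigma_2\bigr)$, so $\chi$ descends to a well-defined continuous equivariant map into the quotient of $\kappa^{-1}(t)\cap\mathbb{R}^3$ by this finite group, proper discontinuity passes to such a quotient, and your finiteness computation runs verbatim with the set $F$ enlarged by at most a factor of four. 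With that repair (and the observation that the kernel of $\tilde{\chi}|_{\Gamma_\Sigma}$ has order two, as you note), the proof is complete.
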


\begin{proof} If $\Aut (\kappa)$ acts properly discontinuously on $\kappa^{-1}(t) \cap \mathbb{R}^3$ then $\pi_0\bigl(\Homeo (\Sigma)\bigl)$ acts properly discontinuously
on $\Hom \bigl(\pi_1( \Sigma),
\SL(2,\mathbb{R})\bigl)//\SL(2,\mathbb{R})$ and therefore away from
the parabolic elements it would act properly discontinuously on
$\Hom \bigl(\pi_1( \Sigma),
\SL(2,\mathbb{R})\bigl)/\SL(2,\mathbb{R})$ as well. Using the
quotient map $q$ $$q: \Hom \bigl(\pi_1( \Sigma),
\SL(2,\mathbb{R})\bigl) \longrightarrow \Hom \bigl(\pi_1( \Sigma),
\SL(2,\mathbb{R})\bigl)/\SL(2,\mathbb{R}).$$ we obtain that
$\Gamma_{\Sigma}$ acts properly discontinuous on
$\Hom\bigl(\pi_1(\Sigma),\SL(2,\mathbb{R})\bigl)$.

\hfill $\square$\end{proof}

Lemma \ref{lemma2<t<18}, is a modification of theorem 5.2.1 in
\cite{goldman03}. Theorem 5.2.1 in \cite{goldman03} says (among
other things) that if $\kappa(x,y,z)>2$, there exists  $\gamma \in
\Aut(\kappa)$ such that one of the following holds:
%The idea behind his proof, is based on a theorem he proves, in which he  basically shows that the for the triple $(x,y,z)$,   there exists a $\gamma \in \Aut(\kappa)$ such that one of the following holds:
\begin{enumerate}
\item $\gamma \cdot (x,y,z) \in (-2,2) \times \mathbb{R}\times \mathbb{R}$
\item $ \gamma \cdot (x,y,z)\in (-\infty,-2)\times (-\infty,-2)\times(-\infty,-2)$
\end{enumerate}
\begin{lem} \label{lemma2<t<18}  Let $\tilde g_c \in \widetilde{G}$  where $\tilde {g_c}$ projects to $g_c' \in \SL (2,\mathbb{R})$ and suppose  $t =\tr (g_c') $. Consider  the set   $$\{ (g,h) \in \SL(2,\mathbb{R})\times \SL(2, \mathbb{R})  : [g,h]=g_c'\}$$ and let $x=\tr g\ , y=\tr h $ and $z=\tr g h$. The coordinates $x, y, $ and $z$ satisfy  $\kappa(x,y,z):=x^2+y^2+z^2-xyz-2=t$.  Suppose there exists $\gamma \in \Aut(\kappa)$ such that $\gamma \cdot (x,y,z) \in (-2,2)\times \mathbb{R} \times \mathbb{R}$, then  there exists $\gamma' \in \Gamma_{\Sigma}$ such that $\gamma' ( g, h)= (g',h')$ where $g'$ is an elliptic element.
\end{lem}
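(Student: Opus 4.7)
My plan is to show that Goldman's reduction, stated at the character-variety level for the full automorphism group $\Aut(\kappa)$, can always be arranged using moves coming from $\Gamma_\Sigma$ at the representation-variety level.

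First, I would record the trace action of the Dehn twists. Using the $\SL(2)$ identity $\tr(AB^{-1}) = \tr A\,\tr B - \tr(AB)$, a direct computation gives
\begin{equation*}
T_1 : (x, y, z) \mapsto (xy - z,\ y,\ x),\qquad T_2 : (x, y, z) \mapsto (x,\ xy - z,\ y),
\end{equation*}
confirming that the image $\tilde\chi(\Gamma_\Sigma) \cong \PSL(2,\mathbb{Z})$ is a proper subgroup of $\Aut(\kappa)$. The quotient $\Aut(\kappa)/\PSL(2,\mathbb{Z})$ is generated by (i) the sign-change group $\{(\epsilon_1, \epsilon_2, \epsilon_3) \in \{\pm 1\}^3 : \epsilon_1 \epsilon_2 \epsilon_3 = 1\}$, which arises from the freedom to lift $(\bar g, \bar h) \in G^2$ to $\SL(2,\mathbb{R})^2$ with commutator $g_c'$, and (ii) an orientation-reversing class represented by a homeomorphism of $\Sigma$ that sends $g_c'$ to $(g_c')^{-1}$.

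Next, since the sign changes act on the first trace by $x \mapsto \pm x$, they manifestly preserve the condition $|x| < 2$, and any sign change appearing in $\gamma$ can be absorbed into the choice of $\SL(2,\mathbb{R})$-lift of $(\bar g, \bar h)$ without affecting the ellipticity of the resulting first component. For the orientation-reversing class, I would inspect Goldman's proof of Theorem 5.2.1 to check that his Markov-style descent proceeds entirely via Vieta jumps composed with cyclic $3$-cycles of coordinates, both of which lie inside $\PSL(2,\mathbb{Z})$ and are realized by products of $T_1$ and $T_2$. Assuming no non-cyclic transposition is ever required, the given $\gamma$ may be replaced by some $\gamma' \in \Gamma_\Sigma$ for which $(g', h') = \gamma'(g, h)$ satisfies $\tr g' \in (-2, 2)$; the classification in Section \ref{C:background} then forces $g'$ to be elliptic.

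The main obstacle is the inspection step: one must carefully verify that Goldman's reduction algorithm never invokes a non-cyclic permutation of $(x, y, z)$ nor the orientation-reversing generator of $\Aut(\kappa)/\PSL(2,\mathbb{Z})$. If in some configuration it did, an additional argument would be needed, for example showing that pre-composing with an appropriate element of $\Gamma_\Sigma$ reshuffles the coordinates cyclically so that only orientation-preserving moves suffice thereafter, or exploiting the symmetry between $(g,h)$ and pairs related by $\Gamma_\Sigma$ to reduce to a configuration where the non-cyclic move becomes unnecessary.
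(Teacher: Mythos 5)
Your preliminary observations (the trace action of the Dehn twists, the identification $\tilde{\chi}(\Gamma_{\Sigma})\cong\PSL(2,\mathbb{Z})$, and the fact that sign changes do not disturb the condition $|x|<2$) agree with the paper's setup, but the decisive step of your argument is left open. You propose to handle the discrepancy between $\Aut(\kappa)$ and the image of $\Gamma_{\Sigma}$ by inspecting Goldman's descent in the proof of his Theorem 5.2.1 and checking that it never invokes a non-cyclic permutation or the orientation-reversing generator; you then explicitly condition your conclusion on this unperformed check (``Assuming no non-cyclic transposition is ever required''), and the fallback suggestions in your last paragraph are not arguments. This is a genuine gap, and it is also an unnecessary detour: the hypothesis of the lemma is only that \emph{some} $\gamma\in\Aut(\kappa)$ puts $(x,y,z)$ into $(-2,2)\times\mathbb{R}\times\mathbb{R}$, so nothing about how such a $\gamma$ is produced by Goldman's algorithm can be relevant, and an argument that depends on those internals proves a weaker statement than the one asserted.

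The paper closes the gap purely structurally. From $\Aut(\kappa)=\PGL(2,\mathbb{Z})\ltimes\bigl(\mathbb{Z}_2\sigma_1\oplus\mathbb{Z}_2\sigma_2\bigr)$ together with $\PGL(2,\mathbb{Z})=\PSL(2,\mathbb{Z})\rtimes\mathbb{Z}_2 Q$, where $Q$ is induced by $(g,h)\mapsto(g^{-1},h)$, every $\gamma\in\Aut(\kappa)$ factors as $\gamma=s\cdot\gamma_0$ with $\gamma_0\in\tilde{\chi}(\Gamma_{\Sigma})$ and $s$ drawn from the finite complementary group generated by $Q$, $\sigma_1$, $\sigma_2$. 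Each such $s$ acts on the \emph{first} coordinate by at most a sign change ($Q$ acts by $(x,y,z)\mapsto(x,y,xy-z)$, fixing $x$; the $\sigma_i$ only flip signs), so the first coordinate of $\gamma\cdot(x,y,z)$ is $\pm$ the first coordinate of $\gamma_0\cdot(x,y,z)$. Hence $\gamma_0\cdot(x,y,z)$ already lies in $(-2,2)\times\mathbb{R}\times\mathbb{R}$, and the corresponding $\gamma'\in\Gamma_{\Sigma}$ sends $(g,h)$ to a pair $(g',h')$ with $|\tr g'|<2$, i.e.\ $g'$ elliptic. The ingredient missing from your proposal is exactly this: the complement of $\tilde{\chi}(\Gamma_{\Sigma})$ in $\Aut(\kappa)$ consists of finitely many coset representatives, all of which preserve the region $(-2,2)\times\mathbb{R}\times\mathbb{R}$, so the ``bad'' part of an arbitrary $\gamma$ can simply be stripped off without any inspection of the descent.
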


%When the first case occurs, it is possible- by applying sufficiently large times a Dehn twist- to change the second coordinate to $y \in (-2,2)$.  Based on this he  proves that $\Aut(\kappa)$ will act ergodically on $\kappa^{-1}(t)$  if $$\Aut (\kappa) \cdot \left(\kappa^{-1}(t)\cap \bigl((-2,2)\times(-2,2)\times\mathbb{R}\bigl)\right) =\kappa^{-1}(t)$$ and it would act properly discontinuously if  $$\Aut (\kappa) \cdot \bigl(\kappa^{-1}(t)\cap \bigl((-\infty,-2]\times(-\infty,-2]\times(-\infty,-2]\bigl)\bigl) =\kappa^{-1}(t)$$
%A simple calculation shows that when $x<-2$, $y<-2$ and $z<-2$, $\kappa(x,y,z)>18$, so if we start with the assumption that $2<\kappa(x,y,z)<18$, we can conclude  that for the triple $(x,y,z) \in \kappa^{-1}(t) $ we can find a $\gamma \in \Aut(\kappa)$ such that $(-2,2)\times (-2,2) \times \mathbb{R}$,  and therefore the group $\Aut(\kappa)$ acts ergodically on $\kappa^{-1}(t)$ for every $ 2\ < t < 18$. I THINK THIS PARAGRAPH IS UNESSESSARY
The idea of the proof of  lemma \ref{lemma2<t<18} is assuming
theorem 5.2.1 in \cite{goldman03}    make a similar statement
firstly about the action of $\pi_0\bigl(\Homeo(\Sigma)\bigl)$ on the
moduli space
$\Hom\bigl(\pi_1(\Sigma,\SL(2,\mathbb{R})\bigl)/\SL(2,\mathbb{R})$
and finally to prove the same statement for the group
$\Gamma_{\Sigma}$ (the group generated by the two Dehn twists $T_1$
and $T_2$ on $\Hom\bigl(\pi_1(\Sigma),G\bigl)$.  The  groups $\Aut
(\kappa)$, $\pi_0(\Homeo(\Sigma))$, and $\Gamma_{\Sigma}$ are
related, so  before we start the proof of the lemma, we are going to
understand the relation between these groups.

%Recall  the definition of the map (see also Appendix \ref{goldman})  $$\chi : \Hom(\pi_1(\Sigma), G)//G \longrightarrow \mathbb{C}^3$$
% $\chi$ is an equivariant map:
%$$\pi_0(\Homeo(\Sigma))\times \Hom(\pi_1(\Sigma), G)//G \longrightarrow \Aut(\kappa)\times \mathbb{C}^3$$
%Notice that the map $\tilde{\chi}$ below has kernel the subgroup generated by the elliptic involution $E=\left(\begin{smallmatrix} -1&0\\0&-1\end{smallmatrix}\right)$: $$ 0\longrightarrow \mathbb{Z}_2E\longrightarrow \pi_0(\Homeo(\Sigma))\xrightarrow{\tilde{\chi}} \Aut(\kappa)$$
%and therefore  $\tilde{\chi}\left(\pi_0\bigl(\Homeo(\Sigma)\bigl)\right)\cong\PGL(2,\mathbb{Z})$ and  $\tilde{\chi}(\Gamma_{\Sigma})\cong\PSL(2,\mathbb{Z})$.

 From section 2.2 in \cite{goldman03} we have   that \begin{equation}\label{Autofkappa}\Aut (\kappa)=\PGL(2,\mathbb{Z})\ltimes \bigl(\mathbb{Z}_2\sigma_1 \oplus \mathbb{Z}_2\sigma_2\bigl)\end{equation} where $\PGL(2,\mathbb{Z})$ is the quotient of $\GL(2,\mathbb{Z})$ by $\{ \pm I\}$  and $\GL(2,\mathbb{Z}) \cong \pi_0\bigl(\Homeo (\Sigma)\bigl)$. $\mathbb{Z}\sigma_1\oplus \mathbb{Z}\sigma_2$ is the group of  sign-change automorphisms $\sigma_1(x,y,z)=(x,-y,-z)$, and $\sigma_2(x,y,z)=(-x,y,-z)$.
On the other hand we have the following exact sequences
$$\begin{array}{ccccccccc}
0 & \longrightarrow & \Gamma_{\Sigma} & \hookrightarrow & \pi_0(\Homeo(\Sigma)) &\xrightarrow{\det} & \mathbb{Z}_2 &\longrightarrow & 0\\
&&\downarrow{\cong}  &&\downarrow{\cong}&&&\\
0 & \longrightarrow &\SL(\mathbb{Z}_{\alpha}\oplus\mathbb{Z}_{\beta})&\hookrightarrow & \GL(\mathbb{Z}_{\alpha}\oplus \mathbb{Z}_{\beta}) & \xrightarrow{\det}&\mathbb{Z}_2 &  \longrightarrow & 0\\
&&\downarrow &&\downarrow&&&&\\
0 & \longrightarrow &\PSL(\mathbb{Z}_{\alpha}\oplus\mathbb{Z}_{\beta})&\hookrightarrow & \PGL(\mathbb{Z}_{\alpha}\oplus \mathbb{Z}_{\beta}) & \xrightarrow{\det}&\mathbb{Z}_2 &  \longrightarrow & 0\\

\end{array}$$

The later  sequence splits and therefore we have that
$$\PGL(2,\mathbb{Z})=\PSL(2,\mathbb{Z}) \rtimes \mathbb{Z}_2
\left(\begin{smallmatrix} -1&0\\0&1\end{smallmatrix}\right)$$ or we
could write $$\pi_0\bigl(\Homeo(\Sigma)\bigl)=\Gamma_{\Sigma}
\rtimes \mathbb{Z}_2 \left(\begin{smallmatrix}
-1&0\\0&1\end{smallmatrix}\right)$$ The element
$\left(\begin{smallmatrix} -1&0\\0&1 \end{smallmatrix}\right)$
corresponds to a  reflection $Q : (g,h)\rightarrow(g^{-1},h)$.

We now start the proof of lemma \ref{lemma2<t<18}

\begin{proof}

We observe that the elements of $\mathbb{Z}_2\sigma_1\oplus
\mathbb{Z}_2\sigma_2$ and $\mathbb{Z}_2\left(\begin{smallmatrix}
-1&0\\0&1\end{smallmatrix}\right)$ will only change the sign of the
coordinates $(x,y,z)$. Hence from \eqref{Autofkappa} we conclude
that there must be an element $\gamma \in \Gamma_{\Sigma}$ such that
$\gamma \cdot (x,y,z)\in(-2,2)\times\mathbb{R}\times\mathbb{R}$

 \hfill $\square$\end{proof}

\section{Infinitesimal transitivity}
\label{C:infinitesimaltransitivity} We consider  the abelian group
\begin{equation}\label{A_0}A_0=\{a:G\to G:[a(g),g]=1,\, \forall g \, \in \Ell, \, a\vert_{G\setminus \Ell}=1\}\end{equation} with group operation pointwise multiplication. We assume that the maps in $A_0$ are smooth, unless
noted otherwise, and we define $A$ to be the abelian group generated
by $A_0$ and the maps $a(g)=g^n$ for $n\in \Bbb Z$.  We refer to
\begin{equation} \label{frak_a}\frak a=\{x:G\to \mathfrak g:supp(x)\subset \Ell, \, Ad_g(x(g))=x(g),\, \forall g \in G\}\end{equation}
as the Lie algebra of $A$, because it has the crucial property
\begin{equation}\label{inclusion}exp(\mathfrak a)\subset A_0\subset A.\end{equation}

The group $A$ acts on $G\times G$ in two ways, corresponding to the
actions \eqref{twists}, by
\begin{equation} \label{groupAactions} A_1(a):(g,h)\to (ga(h)^{-1},h),\quad A_2(a):(g,h)\to (g,ha(g)^{-1}).\end{equation}
Note that the $T^n$ correspond to $a(g)=g^n$.

\begin{lem}\label{lemmaT_j,A_j} The Haar measurable function
$F(g,h)$ is $T_j$-invariant if and only if $F$ is $A_j$-invariant,
for $j=1,2$ (Here we can require the maps in $A$ to be $C^{\infty}$,
$C^0$, or merely measurable - the basic result is insensitive to
this requirement).
\end{lem}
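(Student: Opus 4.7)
My plan is to prove the lemma for $j=1$; the case $j=2$ is completely symmetric under swapping the two factors. One direction is immediate, since the power map $a(g)=g$ lies in $A$ and $A_1(a)=T_1$ for this choice; consequently $\langle T_1\rangle\subset A_1(A)$ and every $A_1$-invariant $F$ is automatically $T_1$-invariant. The real content is the reverse implication restricted to $a\in A_0$.

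For the reverse direction, assume $F$ is $T_1$-invariant and slice via Fubini: for almost every $h\in G$ the partial function $\Phi_h(g):=F(g,h)$ satisfies $\Phi_h(gh^{-1})=\Phi_h(g)$ for a.e.\ $g$, i.e.\ $\Phi_h$ is essentially right-invariant under the single element $h^{-1}$. The first key ingredient I would invoke is the classical fact that the set of $s\in G$ under which a Haar-measurable function on $G$ is essentially right-invariant forms a \emph{closed} subgroup of $G$ (proved by convolving against an approximate identity and passing to a continuous representative, using strong continuity of the right regular representation on $L^2_{\mathrm{loc}}$). Consequently, for a.e.\ $h$, the invariance subgroup of $\Phi_h$ contains the closed subgroup $H(h):=\overline{\langle h\rangle}$.

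Next I would run a case analysis on $h$. If $h$ is hyperbolic or parabolic then $h\notin\Ell$, so by definition $a(h)=1$ for every $a\in A_0$ and $A_1(a)$ acts as the identity on the $h$-slice; nothing has to be verified. If $h$ is elliptic, the centralizer $Z_G(h)$ is the unique one-parameter maximal compact torus $T_h$ passing through $h$, and for a.e.\ elliptic $h$ the rotation angle is irrational, so $H(h)=T_h$ (rational-angle elliptics form a countable union of lower-dimensional conjugacy classes and are Haar null). Because $a\in A_0$ satisfies $[a(h),h]=1$, we get $a(h)\in Z_G(h)=T_h=H(h)$, and therefore $\Phi_h(ga(h)^{-1})=\Phi_h(g)$ for a.e.\ $g$. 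Fubini then reassembles these fibrewise identities into $A_1(a)$-invariance of $F$ on $G\times G$. A general element of $A$ is a product of one in $A_0$ with a power map, and power-map invariance is already encoded in $T_1$-invariance, so the full $A_1$-invariance follows.

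The genuine obstacle is the closed-subgroup step in the second paragraph: upgrading essential right-invariance under the single element $h^{-1}$ to invariance under the whole closure $\overline{\langle h\rangle}$ at the purely measurable level. This is exactly why the lemma is stated with latitude in the regularity class ($C^{\infty}$, $C^{0}$, or merely measurable)—the density argument must be tailored to the chosen class, but the conclusion is the same. The remaining ingredients—the centralizer structure in $\PSL(2,\mathbb{R})$, the Haar-genericity of irrational-angle elliptics, and the Fubini reassembly—are routine once that density step is in hand.
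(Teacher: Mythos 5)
Your proof is correct and follows essentially the same route as the paper: slice in one variable, upgrade essential invariance under the single element to invariance under the closure of the cyclic group it generates, note that for almost every elliptic element (the non-torsion ones) this closure is the full centralizer so that $a(h)$ lies in it, and observe that non-elliptic $h$ are trivial because $a|_{G\setminus\Ell}=1$. The only difference is technical: where you establish the closure step by convolving against an approximate identity, the paper invokes Moore's space $U(G;U(G))$ of measurable functions with the topology of convergence in measure and the continuity of the translation action there --- the same fact in different packaging.
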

Before beginning the proof, we recall a construction in \S 2 of
\cite{moore}.  Given a $\sigma$-finite measure space $(X,\mathcal
B,\mu )$ and a separable metric space $M$, Moore defines $U(X,M)$ to
consist of equivalence classes of $\mu$-measurable functions from
$X$ to $M$, where two functions are equivalent if they are equal
almost everywhere. This space is equipped with the topology of
convergence in measure with respect to a finite measure that
represents the measure class (\cite{moore} Proposition 6) and
depends only upon the measure class of $\mu$ and the topology of
$M$.  This kind of space is useful for us, because the
non-compactness of $G$ seems to preclude the use of $L^2$
techniques, and the natural action on $L^{\infty}$ is not
continuous.  If $M=\Bbb R$, we will simply write $U(X)$. By theorem
1 of \cite{moore} there is an isomorphism  \begin{equation}
\label{U_isomorphism} U(X\times Y)\cong U
\bigl(X;U(Y)\bigl).\end{equation}  We begin the proof of the lemma
using the isomorphism \ref{U_isomorphism}.

\begin{proof}  Let $X=G$ and $Y=G$, we have \begin{equation} \label{isomorphism_classes_functions} U(G\times G)\cong U \bigl(G;U(G)\bigl) \end{equation}
where $F(g,h)$ corresponds to the function of one variable $\bar
{F}:g\to F(g,\cdot )$.  Given  a measurable function $F(g,h)$,
$T_2$ acts on $F$ as follows: $F(T_2(g,h))=F(g,h g^{-1})$. Under the
above identification, we can think of $T_2$ as an operator that acts
on the corresponding $\bar{F}$.
$$\begin{array}{lccccl}
F(g,h) & \longrightarrow & \bar{F}: &g&\rightarrow & F(g,\cdot)\\
\downarrow T_2&&&&&\downarrow T_2\\
F(g,hg^{-1}) & \longrightarrow & \bar{F}: &g&\rightarrow & F(g,\cdot
g^{-1})
\end{array}$$
In other words, we can think of $T_2$ as the multiplication
operator that acts on a function $f$ of one variable by right
translation by $g$:
$$T_2:\bar{F}(g)\to R_g \bar{F} (g),$$  where is $R_g$ is  an operator that acts on a function of one variable by multiplication on the right: $ (R_gf)\vert_h=f\vert_{hg^{-1}}$.

 If a function $F$ is $A_1$ or $A_2$ invariant, by choosing $a : G \rightarrow G$: $a(g)=g$ we get directly that the function $F$ is  $T_1$ or $T_2$ invariant respectively.

Now  suppose that  a function $F$ is $T_2$ invariant i.e  $T_2 F=F$.
More explicitly $F(g,hg^{-1})=F(g,h)$ and  in terms of the
isomorphism \eqref{isomorphism_classes_functions} and the  notation
introduced above, $R_g\bar{F}(g)=\bar{F}(g)$ for almost every  $g$.
(This equality is true only for almost every $g$   since $U(X)$
consists of equivalent classes of functions). Then
$R_{g^n}\bar{F}(g)=\bar{F}(g)$ for all $n \in \mathbb{Z}$. Since the
action of $G$ on $U(G;U(G))$ is continuous (Proposition 12 of
\cite{moore}),  $R_a\bar{F}(g)=\bar{F}(g)$ for all $a$ in the
closure of the group generated by $g$.   Thus if $g$ is elliptic and
non-torsion, then $R_h\bar{F}(g)=\bar{F}(g)$ for all $h$ commuting
with $g$. The set of nontorsion elliptic elements $g$ has full
measure in $\Ell$, so the set of elements $h \in G$  for which
$R_h\bar{F}(g)=\bar{F}(g)$ does not hold  has measure zero.
Therefore we can conclude that $R_{a(g)}\bar{F}(g)=\bar{F}(g)$ for
almost every $g$ provided that $a \in A$. Going back to the initial
notation, this equation means that $F(g,ha(g)^{-1})=F(g,h)$. This
implies that $F$ is $A_2$-invariant. Similarly  if $F$ is $T_1$
invariant then $F$ is  $A_1$ invariant.    \hfill
$\square$\end{proof}

In general, given a Lie group $K$, we can always use left
translation to trivialize the tangent bundle:
$$ K \times \mathfrak{k}  \longrightarrow TK : (g, X)\rightarrow  \left. X\right|_g.$$ Suppose $X : K \rightarrow \mathfrak{k}$ and  $g \in K$.  Then $X(g)$ is a left invariant vector field. Note that $\left.X(g)\right|_g = \left.\dfrac{d}{dt}\right|_{t=0} g e^{t X(g)}$.  We can   identify $\Omega^0(K ; \mathfrak{k})$, the  set of functions from $K$ to $\mathfrak{k}$ with the space of  sections of $TK$, the set of vector fields on $K$:
$$\Omega^0(K;\mathfrak{k}) \leftrightarrow Vect(K) \, : \, X(\cdot) \leftrightarrow V_X,$$
where $\left.V_X\right|_g=\left.X(g)\right|_g\, \in
\left.TG\right|_g$.

Given $X$, $Y$ $: K \rightarrow \mathfrak{k}$, we associate  the
vector fields $V_X$ and $V_Y$ as in the discussion above  and we
form their commutator $[V_X, V_Y]=V_Z$. Then
\begin{equation}\label{commutator} Z(g) =\left.dY\right|_g
\left.\bigl(X(g)\right|_g \bigl)-\left.dX \right|_g
\left.\bigl(Y(g)\right|_g \bigl) +[X(g),Y(g)]\end{equation}

Let $\mathcal{G}$ denote the closure of the group of transformations
of $G\times G$ generated by $A_1$ and $A_2$. Recall
\begin{equation}  \label{A1A2actions} A_1(a):(g,h)\to \bigl(ga(h)^{-1},h\bigl),\quad A_2(a):(g,h)\to \bigl(g,ha(g)^{-1}\bigl).\end{equation}
We consider a continuous curve $ A_2(a_t)(g,h) \, \in \mathcal{G}$,  we differentiate at $(g,h)$ and we translate back at the identity:\begin{eqnarray}\left.\dfrac{d}{dt}\right|_{t=0} A_2(a_t)(g,h)&=&\left.\dfrac{d}{dt}\right|_{t=0}(g^{-1},h^{-1})\bigl(g,ha_t(g)^{-1}\bigl) \nonumber\\
&=&\left.\bigl(0,-h^{-1} ha_t^{-1}(g) a'_t(g)a_t^{-1}(g)\bigl)\right|_{t=0}\nonumber \\
&=&\bigl(0,- x(g)\bigl)\nonumber
\end{eqnarray}
So  the Lie algebra actions corresponding to \eqref{A1A2actions} are
given by
$$\begin{array}{ccccccccc}
x_1:&\mathfrak a&\rightarrow& vect(G\times G)&\text{  and  }& x_2:&\mathfrak a&\rightarrow& vect(G\times G)\\
&x&\mapsto &(-x(h),0)&&&x&\mapsto &(0,-x(g)).
\end{array}$$
These actions do not necessarily commute.

\begin{defin}

(a) $\mathfrak{G_0}$ is the Lie algebra  generated by the vector
fields on $G\times G$ given by
$$\{\bigl(y(h),x(g)\bigl) : x, y, \in \mathfrak{a}\}.$$

(b) $\mathfrak{G}$ is the Lie algebra of vector fields on $G \times
G$ generated by the family of Lie algebras $$\{ Ad_{\sigma}
\mathfrak{G_0} : \sigma \in A_1 \text{ or }  A_2\}.$$
\end{defin}
\begin{lem}\label{lemma:com_of_vf}
The bracket  in $\mathfrak{G_0}$ is given by \small\begin{equation}
[(x_1,y_1),(x_2,y_2)]|_{(g,h)}=\bigl(
dx_2|_h(y_1(g)|_h)-dx_1|_h(y_2(g)|_h), dy_2|_g
(x_1(h)|_g)-dy_1|_g(x_2(h)|_g)\bigl)\end{equation}\normalsize
\end{lem}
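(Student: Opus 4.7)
The plan is to left-trivialize the tangent bundle $T(G\times G)\cong (G\times G)\times(\mathfrak g\oplus\mathfrak g)$ and compute the bracket via the evident $G\times G$-analogue of formula \eqref{commutator}. In this trivialization a generator $(x(h),y(g))$ of $\mathfrak G_0$ corresponds to the $\mathfrak g\oplus\mathfrak g$-valued function $F(g,h)=(x(h),y(g))$, whose value as a tangent vector at $(g,h)$ is $(x(h)|_g,y(g)|_h)$. The bracket rule then becomes
$$[V_1,V_2]|_{(g,h)}\;\leftrightarrow\;dF_2|_{(g,h)}\bigl(V_1|_{(g,h)}\bigr)-dF_1|_{(g,h)}\bigl(V_2|_{(g,h)}\bigr)+\bigl[F_1(g,h),F_2(g,h)\bigr],$$
where the final bracket is taken componentwise in $\mathfrak g\oplus\mathfrak g$.

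The derivative terms unpack mechanically using the special form of the generators. Writing $F_i(g,h)=(x_i(h),y_i(g))$, the first slot of $F_2$ depends only on $h$, so $dF_2$ applied to $V_1=(x_1(h)|_g,y_1(g)|_h)$ picks up only the $h$-component of $V_1$ in that slot, contributing $dx_2|_h(y_1(g)|_h)$; the second slot depends only on $g$, contributing $dy_2|_g(x_1(h)|_g)$. Swapping the roles of $F_1$ and $F_2$ gives the subtracted terms $dx_1|_h(y_2(g)|_h)$ and $dy_1|_g(x_2(h)|_g)$, matching exactly what appears in the statement.

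The one step that has content is the vanishing of the pointwise bracket
$$[F_1(g,h),F_2(g,h)]=\bigl([x_1(h),x_2(h)],[y_1(g),y_2(g)]\bigr),$$
which is where the defining property of $\mathfrak a$ is used. For $x_i\in\mathfrak a$, the element $x_i(h)$ is zero off $\Ell$ and, when nonzero, is $Ad_h$-fixed by \eqref{frak_a}, hence lies in the Lie algebra $\mathfrak g^h$ of the centralizer of $h$. In $\PSL(2,\mathbb R)$ the centralizer of any elliptic element is a one-dimensional compact torus, so $\mathfrak g^h$ is one-dimensional and therefore abelian; thus $[x_1(h),x_2(h)]=0$. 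The identical argument, with the roles of $g$ and $h$ interchanged, gives $[y_1(g),y_2(g)]=0$. Combining this with the derivative computation above yields the lemma; the main obstacle throughout is precisely this use of the centralizer structure in $\PSL(2,\mathbb R)$ to kill the Lie-theoretic term.
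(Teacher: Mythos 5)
Your proposal is correct and follows essentially the same route as the paper: left-trivialize $T(G\times G)$, apply the bracket formula \eqref{commutator} componentwise to get the derivative terms, and kill the pointwise Lie-algebra term. Your justification for the vanishing of $[x_1(h),x_2(h)]$ (namely that $x_i(h)\in\mathfrak g^h$, which is one-dimensional hence abelian for elliptic $h$, and that $x_i$ vanishes off $\Ell$) is just a coordinate-free restatement of the paper's diagonalization argument, and if anything handles the non-elliptic case more explicitly.
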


\begin{proof} Let  $X(g,h)=(x_1(h), y_1(g))$ and $Y(g,h)=(x_2(h),y_2(g))$, then
$$Z(g,h)=\left.X(g,h)\right|_{(g,h)}(\left.Y\right|_{(g,h)})-\left.Y(g,h)\right|_{(g,h)}(\left.X\right|_{(g,h)}) +[X(g,h),Y(g,h)]$$ as in \eqref{commutator}.
\begin{eqnarray}
&=&\left. \dfrac{d}{dt} \right|_{t=0} Y\left(g e^{tx_1(h)}, h e^{ty_1(g)}\right) - \left. \dfrac{d}{dt} \right|_{t=0} X\left(g e^{tx_2(h)}, h e^{ty_2(g)}\right)\nonumber\\
&=&\left. \dfrac{d}{dt} \right|_{t=0}\left(x_2(he^{ty_1(g)}), y_2(ge^{tx_1(h)})\right)-\left. \dfrac{d}{dt} \right|_{t=0}\left(x_1(he^{ty_2(g)}), y_1(ge^{tx_2(h)})\right) \nonumber\\
&=&\left.(dx_2\right|_h\left.(y_1(g)\right|_h)-\left.dx_1\right|_h\left.(y_2(g)\right|_h),
\left.dy_2\right|_g\left.(x_1(h)\right|_g)-\left.dy_1\right|_g\left.(x_2(h)\right|_g
)\nonumber
\end{eqnarray}
Notice that on the above calculations  the pointwise commutator
$$\left[\bigl(x_1(h),y_1(g)\bigl),
\bigl(x_2(h),y_2(g)\bigl)\right]=\bigl([x_1(h),x_2(h)], [y_1(g),
y_2(g)] \bigl)$$ vanishes. Since  $h$ and $g$ are regular elements,
they have distinct eigenvalues and  we can think of $h$ as a
diagonal matrix.  Since   $x_1(h)$ and $x_2(h)$ in $\mathfrak{g}$
commute with $h$, they both need to be in a diagonal form. Hence the
commutator $[x_1(h),x_2(h)]=0$. Similarly we have
$[y_1(g),y_2(g)]=0$.\hfill$\square$\end{proof}
\begin{lem}
Assuming we require maps to be $C^{\infty}$, we have $exp(\mathfrak
G)\subset \mathcal {G}$.
\end{lem}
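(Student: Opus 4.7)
The plan is to establish $\exp(\mathfrak{G}) \subset \mathcal{G}$ in three stages, exploiting the inclusion \eqref{inclusion} and the fact that $\mathcal{G}$ is a closed subgroup of diffeomorphisms.

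\textbf{Stage 1 (basic vector fields).} For $x \in \mathfrak{a}$, I would observe that the vector field $x_2(x) = (0, -x(g))$ is, by the calculation preceding the definition of $x_1, x_2$, exactly the infinitesimal generator at $t=0$ of the curve $A_2(\exp(tx))$. By \eqref{inclusion}, $\exp(tx) \in A_0 \subset A$ for all $t$, and since $\exp$ is a one-parameter subgroup of the abelian group $A$ and $A_2$ is a group homomorphism, $t \mapsto A_2(\exp(tx))$ is a one-parameter subgroup of $\mathcal{G}$. Its time-$t$ flow, which equals $\exp(t \, x_2(x))$, therefore lies in $\mathcal{G}$ for all $t$. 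The same argument handles $x_1(x) \in \mathcal{G}$.

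\textbf{Stage 2 (sums and brackets).} For smooth vector fields $X, Y$ on $G \times G$ whose time-$t$ flows $\varphi^X_t, \varphi^Y_t$ lie in $\mathcal{G}$, I would invoke the Trotter and commutator product formulas
\[
\varphi^{X+Y}_t = \lim_{n \to \infty} \bigl( \varphi^X_{t/n} \circ \varphi^Y_{t/n} \bigr)^n,
\qquad
\varphi^{[X,Y]}_t = \lim_{n \to \infty} \bigl( \varphi^X_{s_n} \circ \varphi^Y_{s_n} \circ \varphi^X_{-s_n} \circ \varphi^Y_{-s_n} \bigr)^{n}
\]
with $s_n = \sqrt{t/n}$. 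The right-hand sides are compositions inside the group generated by $A_1$ and $A_2$, and since $\mathcal{G}$ is by definition closed, both limits lie in $\mathcal{G}$. Hence the set $\mathcal{L} := \{ Z : \exp(tZ) \in \mathcal{G} \text{ for all } t\}$ is closed under addition and Lie bracket. In particular, every generator $(y(h), x(g)) = (-x_1(-y)) + (-x_2(-x))$ of $\mathfrak{G}_0$ lies in $\mathcal{L}$, so $\mathfrak{G}_0 \subset \mathcal{L}$.

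\textbf{Stage 3 ($Ad_\sigma$).} For any $\sigma \in A_1 \cup A_2 \subset \mathcal{G}$ and any $Z \in \mathcal{L}$,
\[
\exp\bigl(t \, Ad_\sigma Z\bigr) = \sigma \circ \exp(tZ) \circ \sigma^{-1} \in \mathcal{G},
\]
so $Ad_\sigma Z \in \mathcal{L}$. Combined with Stage 2 this shows that $\mathcal{L}$ contains the Lie subalgebra generated by $\{Ad_\sigma \mathfrak{G}_0 : \sigma \in A_1 \text{ or } A_2\}$, which is exactly $\mathfrak{G}$. Therefore $\exp(\mathfrak{G}) \subset \mathcal{G}$.

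\textbf{Expected obstacle.} The routine parts are the algebraic reductions in Stages 1 and 3. The real care is needed in Stage 2: one must specify the topology in which $\mathcal{G}$ is closed and verify that the Trotter-type limits converge in that topology, and one must know that the vector fields involved are complete so the global flows make sense. Completeness for the basic generators is automatic because each $A_j(a)$ with $a \in A$ is a globally defined diffeomorphism of $G \times G$; and on the finite-dimensional manifold $G \times G$ the Trotter formulas converge in $C^\infty_{\mathrm{loc}}$ by classical ODE arguments, which is more than enough to conclude membership in $\mathcal{G}$ once one pins down that $\mathcal{G}$ is at least closed in the $C^0$-on-compacta (hence measurable) topology used in Lemma \ref{lemmaT_j,A_j}.
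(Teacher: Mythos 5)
Your argument is a legitimate self-contained proof, whereas the paper's ``proof'' of this lemma is a one-line citation of Lemma (2.1.20) in Pickrell--Xia, which establishes the analogous statement for a \emph{compact} target group. What you have written is essentially a reconstruction of that argument: Stage 1 (one-parameter subgroups $t\mapsto A_j(\exp(tx))$ via the inclusion $\exp(\mathfrak a)\subset A_0\subset A$), Stage 2 (Trotter and commutator product formulas plus closedness of $\mathcal G$), and Stage 3 (conjugation by $\sigma\in\mathcal G$ realizes $Ad_\sigma$) is exactly the standard route, and it correctly exhausts the generating set of $\mathfrak G$. The benefit of your version is that it makes explicit what the citation hides, namely the two points where the noncompactness of $G=\PSL(2,\mathbb R)$ actually matters.

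Those two points are where your proof, like the paper's citation, is not yet airtight. First, completeness: you correctly observe that the basic generators $x_1(y)=(-y(h),0)$ and $x_2(x)=(0,-x(g))$ integrate to the globally defined diffeomorphisms $A_1(\exp(ty))$, $A_2(\exp(tx))$, but a sum such as $(y(h),x(g))$ is a genuinely coupled vector field, and since elements of $\mathfrak a$ are only required to be supported in $\Ell$ and $Ad$-invariant (no boundedness is imposed), completeness of sums and iterated brackets is not automatic on the noncompact manifold $G\times G$; without it, $\exp(Z)$ for general $Z\in\mathfrak G$ is not even a globally defined transformation and the statement needs interpreting (e.g.\ via local flows, or by restricting to compactly supported or bounded elements of $\mathfrak a$). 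In the compact setting of Pickrell--Xia this issue is vacuous, which is precisely why the citation does not cover it. Second, the topology: the paper never says in what topology $\mathcal G$ is the closure of $\langle A_1,A_2\rangle$; your resolution (Trotter limits converge in $C^0_{\mathrm{loc}}$, which dominates convergence in measure in Moore's $U(X,M)$ topology used in Lemma \ref{lemmaT_j,A_j}) is the right way to close this, and is worth stating as part of the proof rather than as an afterthought. Finally, a cosmetic slip: with the paper's signs one has $(y(h),x(g))=x_1(-y)+x_2(-x)$, not $(-x_1(-y))+(-x_2(-x))$; this is harmless since your set $\mathcal L$ is closed under negation, but the identity as written is off by a sign.
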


\begin{proof} See Lemma (2.1.20) in \cite{PX}. \hfill $\square$\end{proof}

 Our goal now is to show that the Lie algebra $\mathfrak G$ is
infinitesimally transitive along certain fibers of the commutator
map $p$. We first calculate the derivative of the map $p$.
\begin{eqnarray} dp\vert_{(g,h)}:\mathfrak g\oplus \mathfrak g \to \mathfrak g: (\xi ,\eta) &\to& \xi^{
hgh^{-1}}-\xi^{hg}+\eta^{hg}-\eta^h \nonumber\\
 &=&(\xi^{h^{-1}}-\xi +\eta -\eta^{g^{-1}})^{hg}. \nonumber\end{eqnarray}
 To see this, we consider a curve on the tangent space of $G \times G$ at $(g,h)$ :  $\{(g_t,h_t) : t \in \mathbb{R}\}$, we differentiate it and then we translate at the identity: (at $t=0$, $(g_0,h_0)=(g,h))$.

\begin{eqnarray} &&\left.\dfrac{d}{dt}\right|_{t=0} [g,h]^{-1}[g_t,h_t]=\left.\dfrac{d}{dt}\right|_{t=0} hgh^{-1}g^{-1}g_th_tg_t^{-1}h_t^{-1} \nonumber\\
&=&hgh^{-1}g^{-1}(\left.\dfrac{d}{dt}\right|_{t=0} g_t)h_0g_0^{-1}h_0^{-1}+hgh^{-1}g^{-1}g_0(\left.\dfrac{d}{dt}\right|_{t=0} h_t) g_0^{-1}h_0^{-1}\nonumber\\
&-&hgh^{-1}g^{-1}g_0h_0(\left.\dfrac{d}{dt}\right|_{t=0} g_t^{-1}) g_0^{-1}-hgh^{-1}g^{-1}g_0h_0g_0^{-1} (\left.\dfrac{d}{dt}\right|_{t=0} h_t^{-1})\nonumber\\
&=&hgh^{-1} \xi h g^{-1} h^{-1}+hg \eta g^{-1} h^{-1}-h g \xi g^{-1} h^{-1}-h \eta h^{-1} \nonumber\\
&=& \xi^{hgh^{-1}}-\xi^{hg}+\eta^{hg}-\eta^{h}.\nonumber
\end{eqnarray}

So, $$ \begin{array}{lccccccl} p&:& G\times G &\to& G : &(g,h) &\to& [g,h]\\
dp\vert_{(g,h)}&:& \mathfrak{g} \oplus \mathfrak{g}&\to
&\mathfrak{g}
:&(\xi,\eta)&\to&\xi^{hgh^{-1}}-\xi^{hg}+\eta^{hg}-\eta^{h}.
\end{array} $$

\begin{lem} Suppose $g \in G$, then  the subalgebra  $\mathfrak {g}^g$ is either one dimensional  or is equal to   $\mathfrak {g}$.
\end{lem}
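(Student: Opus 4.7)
The plan is to analyze $\mathfrak{g}^g = \ker(\mathrm{Ad}_g - I)$ as a subspace of the 3-dimensional algebra $\mathfrak{g}\cong\mathfrak{sl}(2,\mathbb{R})$. A priori its dimension is $0, 1, 2,$ or $3$, and the task is to rule out $0$ and $2$ whenever $g \neq e$, while observing that $g = e$ forces $\mathfrak{g}^g = \mathfrak{g}$.

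The direct route is case analysis on the conjugacy class of $g$, which is recalled in Section \ref{C:background}. If $g = e$ then $\mathrm{Ad}_g = I$ and $\mathfrak{g}^g = \mathfrak{g}$. Otherwise $g$ is elliptic, parabolic, or hyperbolic, and I would conjugate to a standard representative and compute $\mathrm{Ad}_g$ on the basis $H = \mathrm{diag}(1,-1)$, $E = \bigl(\begin{smallmatrix}0&1\\0&0\end{smallmatrix}\bigr)$, $F = \bigl(\begin{smallmatrix}0&0\\1&0\end{smallmatrix}\bigr)$. For hyperbolic $g = \mathrm{diag}(\lambda,\lambda^{-1})$ with $\lambda \neq \pm 1$, $\mathrm{Ad}_g$ has eigenvalues $1, \lambda^2, \lambda^{-2}$, so the fixed subspace is $\mathbb{R}H$. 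For parabolic $g = \exp(E)$, $\mathrm{Ad}_g = \exp(\mathrm{ad}_E)$ is unipotent with a single size-3 Jordan block, so the fixed subspace is $\mathbb{R}E$. For elliptic $g$ conjugate in $\mathrm{PSU}(1,1)$ to a rotation by an angle $\theta \not\equiv 0\pmod{\pi}$, $\mathrm{Ad}_g$ acts trivially on the infinitesimal rotation axis and as a rotation by $2\theta$ on its Killing-orthogonal complement, once again giving a 1-dimensional fixed subspace.

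A more conceptual alternative I would mention is to observe that $\mathfrak{g}^g$ is always a Lie subalgebra (because $\mathrm{Ad}_g[X,Y] = [\mathrm{Ad}_gX,\mathrm{Ad}_gY]$), and that up to conjugation the only 2-dimensional subalgebra of $\mathfrak{sl}(2,\mathbb{R})$ is the Borel $\mathfrak{b}$. If $\mathfrak{g}^g$ were equal to $\mathfrak{b}$, then $g$ would centralize the entire Borel subgroup $B = \exp(\mathfrak{b})$. But $B$ is self-normalizing with trivial center, so $Z_G(B) = \{e\}$, contradicting $g \neq e$. Dimension $0$ is excluded by noting that on $\mathrm{PSL}(2,\mathbb{R})$ the exponential map is surjective, hence $g = \exp(X)$ for some $X \neq 0$, and such $X$ automatically lies in $\mathfrak{g}^g$.

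There is no serious obstacle: either route is essentially linear algebra, the only mild care being needed to verify that no exotic 2-dimensional fixed subspace arises, which the case analysis settles by inspection and the conceptual argument settles via the structure of Borel subgroups of $G$.
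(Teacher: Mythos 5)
Your main argument is correct and is essentially the paper's proof: both proceed by conjugating $g$ to a standard elliptic, parabolic, or hyperbolic representative and reading off the fixed subspace of $\mathrm{Ad}_g$ (the paper computes the centralizer of the representative in $\mathfrak{g}$ directly, while you diagonalize $\mathrm{Ad}_g$ on the basis $H,E,F$ --- the same computation in a different guise). Your alternative route via the classification of two-dimensional subalgebras of $\mathfrak{sl}(2,\mathbb{R})$, the triviality of $Z_G(B)$, and surjectivity of $\exp$ on $\mathrm{PSL}(2,\mathbb{R})$ is also sound and more conceptual, but it is not what the paper does.
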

\begin{proof} We are going to consider different cases for $g$.
Suppose $g$ is elliptic. We identify $G$ with $\PSU(1,1)$ and we can
choose a basis to diagonalize $g$. So we can suppose that $g =
\left(\begin{smallmatrix}\lambda & 0\\0&  \lambda^{-1}
\end{smallmatrix}\right)$, with $|\lambda|=1$. If $\lambda = \pm 1$
then  $\mathfrak {g} ^g = \mathfrak {g}$. Otherwise  $\lambda \neq
\lambda^{-1}$ and in this case  all the elements in $\mathfrak{g}
\cong \mathfrak{su}(1,1)$  that commute with $g$  have the diagonal
form  $\left(\begin{smallmatrix} i x & 0\\0&  -i x
\end{smallmatrix}\right)$ where $x\, \in \mathbb{R}$. Therefore
$\mathfrak{g}^g = \left\{\left(\begin{smallmatrix} i x & 0\\0&  -i x
\end{smallmatrix}\right)  : x \in \mathbb{R} \right\}$ which is
one-dimensional.

Suppose $g$ is hyperbolic. We identify $G$ with $\PSL(2,
\mathbb{R})$ and we can choose a basis to diagonalize $g$. So we can
suppose that $g=\left(\begin{smallmatrix} \lambda & 0\\0&
\lambda^{-1} \end{smallmatrix}\right)$  where $\lambda >1$.   If
$\lambda \neq 1$, then all the elements in $\mathfrak{g} \cong
\mathfrak{sl}(2, \mathbb{R})$ that commute with $g$  have the
diagonal form  $\left(\begin{smallmatrix}  x & 0\\ 0 & -x
\end{smallmatrix}\right)$ where $x$ is in $\mathbb{R}$. Therefore
$\mathfrak{g}^g = \left\{\left(\begin{smallmatrix} x & 0\\0&  -x
\end{smallmatrix}\right)  : x \in \mathbb{R} \right\}$ which is
one-dimensional.

Finally, suppose $g$ is parabolic. We identify $G$ with $\PSL(2,
\mathbb{R})$ and we can choose a basis to write $g$ in the form
$\left(\begin{smallmatrix}  1 & 1\\ 0 & 1\end{smallmatrix}\right)$
if $g$ has eigenvalue equal to 1,  or in the form
$\left(\begin{smallmatrix}  1 & -1\\ 0 & 1\end{smallmatrix}\right)$
if it has eigenvalue -1. Then  an element in $\mathfrak{g}$ would
commute with $g$ if it  is of the form $\left(\begin{smallmatrix}  1
& x\\ 0 & 1\end{smallmatrix}\right)$ where $x$ is in $\mathbb{R}$.
Therefore $\mathfrak{g}^g = \left\{\left(\begin{smallmatrix} 1 &
x\\0&  1 \end{smallmatrix}\right)  : x \in \mathbb{R} \right\}$
which is one-dimensional. \hfill $\square$\end{proof}

\begin{lem} A point $(g,h)$ is regular for $p$ if and only if $\mathfrak{g}^g \cap \mathfrak{g}^h = \{0\}$.
\end{lem}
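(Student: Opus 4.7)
The plan is to identify the orthogonal complement of $\Image(dp|_{(g,h)})$ with $\mathfrak{g}^g \cap \mathfrak{g}^h$ via the Killing form on $\mathfrak{g}\cong \mathfrak{sl}(2,\mathbb R)$; since that pairing is nondegenerate, regularity (surjectivity of $dp|_{(g,h)}$) becomes equivalent to the stated algebraic condition.

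First, starting from the formula
$$dp|_{(g,h)}(\xi,\eta)=\bigl(\xi^{h^{-1}}-\xi+\eta-\eta^{g^{-1}}\bigr)^{hg}$$
computed above, and observing that $X\mapsto X^{hg}$ is a linear automorphism of $\mathfrak g$, I would reduce the problem to showing that the subspace
$$W:=\Image\bigl(X\mapsto X^{h^{-1}}-X\bigr)+\Image\bigl(Y\mapsto Y-Y^{g^{-1}}\bigr)\subset \mathfrak g$$
equals $\mathfrak g$ if and only if $\mathfrak g^g\cap \mathfrak g^h=\{0\}$.

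Second, I would invoke the Killing form $B$ on $\mathfrak g\cong \mathfrak{sl}(2,\mathbb R)$, which is nondegenerate (because $\mathfrak{sl}(2,\mathbb R)$ is semisimple) and invariant under conjugation, so $B(X^k,Y)=B(X,Y^{k^{-1}})$ for every $k\in G$. From this invariance, the $B$-adjoint of the operator $L_k:X\mapsto X^k-X$ is $L_{k^{-1}}:Y\mapsto Y^{k^{-1}}-Y$, and clearly $\Kernel(L_k)=\mathfrak g^k$. The standard identity $\Image(T)^\perp=\Kernel(T^*)$ for a nondegenerate bilinear form then gives $\Image(L_k)^\perp=\mathfrak g^k$ (using that the centralizer of $k$ coincides with that of $k^{-1}$). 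Intersecting complements,
$$W^\perp=\Image(L_{h^{-1}})^\perp\cap \Image(L_{g^{-1}})^\perp=\mathfrak g^h\cap \mathfrak g^g.$$

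Finally, nondegeneracy of $B$ yields $W=\mathfrak g$ iff $W^\perp=\{0\}$, which is exactly $\mathfrak g^g\cap \mathfrak g^h=\{0\}$. The only step that requires any care is the adjoint computation $L_k^{*}=L_{k^{-1}}$ from the conjugation-invariance of $B$; everything else is formal linear algebra in a finite-dimensional space equipped with a nondegenerate pairing. A useful sanity check against the previous lemma: generically $\mathfrak g^g$ and $\mathfrak g^h$ are one-dimensional, so in three-dimensional $\mathfrak g$ they intersect trivially precisely when they are distinct lines, matching the expectation that regular points of the commutator map form an open dense set.
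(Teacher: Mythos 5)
Your proof is correct, and it takes a genuinely cleaner route than the paper's. Both arguments begin with the same reduction: since $X\mapsto X^{hg}$ is an automorphism of $\mathfrak g$, surjectivity of $dp|_{(g,h)}$ is equivalent to $\Imaginary(Ad\,h^{-1}-1)+\Imaginary(Ad\,g^{-1}-1)=\mathfrak g$. From there the paper counts dimensions: it invokes the preceding lemma to say each centralizer is a line, hence each image is a plane, and the sum fails to be all of $\mathfrak g$ exactly when the two planes coincide --- which the paper asserts, without justification, to be equivalent to $\mathfrak g^{g}=\mathfrak g^{h}$. Your Killing-form duality $\Imaginary(Ad\,k-1)^{\perp}=\Kernel(Ad\,k^{-1}-1)=\mathfrak g^{k}$ is precisely the fact needed to justify that asserted step, so your argument both reproduces and completes the paper's. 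It also buys two small improvements: it needs no elliptic/hyperbolic/parabolic case analysis, and it uniformly covers the degenerate possibility that $g$ or $h$ is central (so that $\mathfrak g^{g}=\mathfrak g$ rather than a line), a case the paper's phrase ``the vector spaces $\mathfrak g^{h}$ and $\mathfrak g^{g}$ are one dimensional'' quietly excludes. The ingredients you rely on --- nondegeneracy and $Ad$-invariance of the Killing form on $\mathfrak{sl}(2,\mathbb R)$, the identity $\Imaginary(T)^{\perp}=\Kernel(T^{*})$, and $(W^{\perp})^{\perp}=W$ --- are all valid for an indefinite nondegenerate form on a finite-dimensional space, so there is no hidden positivity assumption and no gap.
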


\begin{proof}
Suppose that the point $(g,h)$ is regular for $p$. This means that
$dp\vert_{(g,h)}$ is surjective i.e. $\Imaginary (dp|_{(g,h)}) =
\mathfrak{g}$. The Lie algebra $\mathfrak {g}$ has dimension three
hence $\Kernel (\left. dp\right|_{(g,h)})$  has dimension three as
well. The vector spaces  $\mathfrak{g}^h$ and  $\mathfrak{g}^g$ are
one dimensional (by the lemma above), so if their intersection
$\mathfrak{g}^g \cap \mathfrak{g}^h$ is non-empty, it  has
dimension one and in this case  we would have
\begin{equation}\label{eqvectorspaces}\mathfrak{g}^h \cap
\mathfrak{g}^g = \mathfrak{g}^h = \mathfrak{g}^g.\end{equation} We
prove that this will lead to a contradiction.

We can write the image of the map $dp|_{(g,h)}$  in terms of the
adjoint map as follows: \begin{equation}\label{ImageAd}\Imaginary\,
dp=\Imaginary (Ad\,h^{-1}-1)+\Imaginary
(Ad\,g^{-1}-1).\end{equation} Also, we can think of $\mathfrak{g}^h$
and $\mathfrak{g}^g$ as  $$\mathfrak{g}^h=\Kernel\, (Ad \, h^{-1}-1)
\text{ and  } \mathfrak{g}^g=\Kernel\, (Ad \, g^{-1}-1).$$ From
\eqref{eqvectorspaces}, we can obtain that $\Imaginary
(Ad\,h^{-1}-1)=\Imaginary (Ad\,g^{-1}-1)$. The space $\Imaginary
(Ad\,h^{-1}-1)$ has dimension two but the image of the map $dp$  has
dimension three since $dp$ is surjective. This contradicts
\eqref{ImageAd} and hence the   assumption   $\mathfrak{g}^g \cap
\mathfrak{g}^h \neq  \{0\}$.

For the other direction  we assume that
 $\mathfrak{g}^g \cap \mathfrak{g}^h = \{0\}$ . To prove that $p$ is regular for $(g,h)$ we need to prove that $dp|_{(g,h)}$ is surjective. Therefore it would be enough to show that $\Imaginary (dp)$ has dimension three. Since $\Imaginary (dp)$ has two subspaces that have dimension two, it has to have at least dimension two. If it  had dimension two, then the two subspaces $ \Imaginary (Ad\,h^{-1}-1)$ and  $\Imaginary (Ad\,g^{-1}-1)$ need to be equal. But this would imply that $\mathfrak{g}^g = \mathfrak{g}^h$ which contradicts our assumption. \hfill $\square$ \end{proof}

\begin{lem} \label{regularpoints}A point $(g,h)$  is regular for $p$ if and only if $[g,h] \neq 1$
\end{lem}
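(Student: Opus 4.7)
The plan is to combine the preceding lemma with a direct analysis of centralizers in $\PSL(2,\mathbb{R})$. By that lemma $(g,h)$ is regular for $p$ if and only if $\mathfrak{g}^g\cap\mathfrak{g}^h=\{0\}$, so the task reduces to showing that this intersection is trivial precisely when $[g,h]\neq 1$. I would prove both implications by contrapositive.

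For the implication $\mathfrak{g}^g\cap\mathfrak{g}^h\neq\{0\}\Rightarrow [g,h]=1$, suppose $X$ is a nonzero element of the intersection. Since $X\neq 0$, the one-parameter subgroup $\{\exp(tX)\}$ is nontrivial, so I can pick $t_0$ with $s:=\exp(t_0X)\neq 1$. Both $g$ and $h$ commute with $s$, so they lie in $Z_G(s)$. Applying the previous lemma to $s$, the subalgebra $\mathfrak{g}^s$ is one-dimensional (since $s\neq 1$), and the explicit description there shows $Z_G(s)$ is abelian (diagonal matrices in the elliptic/hyperbolic case, upper-triangular unipotents in the parabolic case, up to the disconnected $\{\pm I\}$ that collapses in $\PSL(2,\mathbb{R})$). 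Thus $g$ and $h$ commute.

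For the converse, assume $[g,h]=1$. If $g=1$ then $\mathfrak{g}^g=\mathfrak{g}$, and the intersection equals $\mathfrak{g}^h\neq\{0\}$; the case $h=1$ is symmetric. Otherwise both centralizers $Z_G(g)$ and $Z_G(h)$ are one-dimensional abelian subgroups by the preceding lemma. Since $h\in Z_G(g)$ and $Z_G(g)$ is abelian, $h$ commutes with $\exp(tX)$ for every $X\in\mathfrak{g}^g$ and every $t$, hence $\mathrm{Ad}_h(X)=X$, so $X\in\mathfrak{g}^h$. Therefore $\mathfrak{g}^g\subset\mathfrak{g}^h$, and the intersection is the nonzero subalgebra $\mathfrak{g}^g$.

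The one step that deserves genuine care is checking that each centralizer $Z_G(g)$ for $g\neq 1$ is \emph{abelian}, not merely one-dimensional — for example, in the parabolic case one has to verify that the $\pm I$ that normalize the unipotent subgroup indeed commute with it in $\PSL(2,\mathbb{R})$. Since the previous lemma already diagonalizes $g$ (or puts it in Jordan form), this verification is short case-by-case, and then the logical structure above closes the proof with no further calculation.
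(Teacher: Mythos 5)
Your proposal is correct and follows essentially the same route as the paper: both reduce via the preceding lemma to showing that $\mathfrak{g}^g\cap\mathfrak{g}^h\neq\{0\}$ exactly when $[g,h]=1$, and both ultimately rest on the same case-by-case description of centralizers of nontrivial elements of $\PSL(2,\mathbb{R})$ (diagonalizable with distinct eigenvalues versus unipotent). Your packaging of that case analysis into the single statement ``the centralizer of any $s\neq 1$ is abelian'' makes the converse direction a bit cleaner than the paper's explicit exhibition of a common commuting $x$ for each conjugacy type of $g$, but the underlying computation --- including the check you rightly flag about lifts conjugating $s$ to $-s$ in $\SL(2,\mathbb{R})$ --- is the same.
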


\begin{proof} Using the above lemma, it is equivalent to show that $$ \mathfrak{g}^g \cap \mathfrak{g}^h = \{0\} \text{  if and only if } [g,h] \neq 1,$$
$$ \text{ or } \mathfrak{g}^g \cap \mathfrak{g}^h  \neq \{0\} \text{  if and only if } [g,h] = 1,$$
Suppose there exists a non-zero $x  \, \in \mathfrak{g}^g \cap
\mathfrak{g}^h$. Then we have $gxg^{-1}=x$ and $hxh^{-1}=x$.

Since $x \in \mathfrak{sl} (2, \mathbb{R})$, it has the form $x=
\bigl(\begin{smallmatrix} a & b \\ c &-a \end{smallmatrix}\bigl)$.
The characteristic equation for this matrix is $$\lambda^2+\det
x=0.$$

 If  $\det x <0$, $x$ has two distinct real eigenvalues, therefore we can choose a basis  so that $x$ would have diagonal form. This would force $g$ and $h$ to be diagonal as well, and therefore they would have to commute.

If $\det x > 0$, $x$ has two purely imaginary (distinct)
eigenvalues, so we can use the same argument as above, and obtain
that $g$ and $h$ commute.

Finally, if  $\det x =0$, $\lambda =0$ and  we can choose a basis
so that we can write  $x= \bigl(\begin{smallmatrix} 0 & 1 \\ 0 &0
\end{smallmatrix}\bigl)$. Then   $g$ and $h$ need to have the form
$x= \bigl(\begin{smallmatrix} 1 & a \\ 0 & 1
\end{smallmatrix}\bigl)$, for $a \in \mathbb{R}$. Notice that
matrices of this form  commute with each other.

Suppose now  that $[g,h]=1$. We want to prove that $\mathfrak{g}^g
\cap \mathfrak{g}^h  \neq \{0\}$. We are going to consider different
cases: If $g$ is hyperbolic, we can pick a basis so that
$g=\bigl(\begin{smallmatrix} \lambda& 0 \\ 0 & \frac{1}{\lambda}
\end{smallmatrix}\bigl)$. Since  $h$ commutes with $g$ it should
also be diagonal.  If we choose $x \neq 0 \, \in \mathfrak{g} $ in
diagonal form, it is obvious that $x$ would commute with both $g$
and $h$.  This would imply that $x \in \mathfrak{g}^g \cap
\mathfrak{g}^h$.

If $g$ is elliptic, we can pick a basis so that
$g=\bigl(\begin{smallmatrix} \alpha& 0 \\ 0 & \bar{\alpha}
\end{smallmatrix}\bigl)$, and using the  same argument as above we
can find a non-zero element $x \in \mathfrak{g}$ such that $x \in
\mathfrak{g}^h \cap \mathfrak{g}^g$.

If $g$ is parabolic, we can choose a basis so that $g=
\bigl(\begin{smallmatrix} 1 & 1 \\ 0 &1 \end{smallmatrix}\bigl)$.
The condition that $h$ commutes with $g$  forces $h$ to have the
form  $h= \bigl(\begin{smallmatrix} 1 & b \\ 0 &1
\end{smallmatrix}\bigl)$. If we pick $x =  \bigl(\begin{smallmatrix}
0 & 1 \\ 0 &0 \end{smallmatrix}\bigl) \in \mathfrak{g}$, we see that
$x$ commutes with both $g$ and $h$ and hence in all cases $\mathfrak
g ^g \cap \mathfrak g^h \neq \{ 0\}$

\hfill $\square$\end{proof}
\begin{prop}\label{inf_transitivity}
For $(g,h) \in \Ell \times \Ell$ such that $[g,h] \neq 1$, the
evaluation map
$$\left.eval\right|_{(g,h)} : \mathfrak{G} \to \Kernel \left(\left.dp\right|_{(g,h)}\right)$$
 is surjective.
\end{prop}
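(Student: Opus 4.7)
The plan is to prove surjectivity by a dimension count. By Lemma \ref{regularpoints} the hypothesis $[g,h]\neq 1$ makes $(g,h)$ a regular point of $p$, so $dp|_{(g,h)}:\mathfrak g\oplus \mathfrak g\to \mathfrak g$ is surjective and $\Kernel(dp|_{(g,h)})$ has dimension $3$. It suffices to exhibit a $3$-dimensional subspace of this kernel lying in the image of $eval|_{(g,h)}$.

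The first two dimensions come for free from $\mathfrak{G}_0$ itself. Since $g,h\in \Ell$ are regular (and in particular not the identity of $G$), the centralizer algebras $\mathfrak g^g$ and $\mathfrak g^h$ are each $1$-dimensional by the earlier centralizer lemma. I can choose $x,y\in \mathfrak a$ so that $x(g)$ spans $\mathfrak g^g$ and $y(h)$ spans $\mathfrak g^h$, producing $(0,x(g'))$ and $(y(h'),0)$ as elements of $\mathfrak{G}_0$ (taking the other entry to be the zero element of $\mathfrak a$). Their evaluations at $(g,h)$ automatically satisfy the kernel condition $(Ad_{h^{-1}}-1)\xi = (Ad_{g^{-1}}-1)\eta$, because each side vanishes separately. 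Thus $eval|_{(g,h)}(\mathfrak{G}_0)$ already contains the $2$-dimensional subspace $\mathfrak g^h\oplus \mathfrak g^g$.

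The key step is to produce a third independent direction via an Ad-conjugation in $\mathfrak G$. I would take the vector field $V|_{(g',h')}=(0,x(g'))$ and push it forward by $\sigma=A_1(a)$ for a suitable $a\in A_0$. Since $\sigma^{-1}(g,h)=(ga(h),h)$, a direct chain-rule calculation (applied to a curve $(ga(h), h e^{t x(ga(h))})$) shows that the second coordinate of $(\sigma_*V)|_{(g,h)}$ is exactly $x(ga(h))\in \mathfrak g^{ga(h)}$. The crucial observation is that in $G\cong \PSL(2,\mathbb{R})$ two distinct maximal tori meet only at the identity: since $(g,h)$ is regular, $\mathfrak g^g\cap \mathfrak g^h=\{0\}$ forces $T_g\neq T_h$, so any nontrivial $a(h)\in T_h$ must fail to lie in $T_g$, whence $ga(h)\notin T_g$ and $\mathfrak g^{ga(h)}\neq \mathfrak g^g$. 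The pushforward is then linearly independent of $\mathfrak g^h\oplus \mathfrak g^g$, producing a $3$-dimensional subspace of $\Kernel(dp|_{(g,h)})$ inside $eval|_{(g,h)}(\mathfrak G)$, which must therefore equal the whole kernel.

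The main obstacle in making this rigorous is arranging the choices compatibly. I need $x\in \mathfrak a$ to be nonzero at $ga(h)$, which requires $ga(h)\in \Ell$, so $a$ must be chosen with $a(h)$ close enough to the identity and the openness of $\Ell\subset G$ invoked; once this is secured, the freedom in $\mathfrak a$ allows me to prescribe $x(ga(h))$ arbitrarily in $\mathfrak g^{ga(h)}$. I also need $\sigma_* V$ to lie in $\mathfrak G\subseteq \Kernel(dp)$. The first inclusion is immediate from the definition of $\mathfrak G$ in terms of $Ad_\sigma \mathfrak{G}_0$. The second follows because $A_1(a)$ preserves the commutator: since $a(h)$ commutes with $h$, one computes $[ga(h)^{-1},h]=g a(h)^{-1} h a(h) g^{-1} h^{-1}=ghg^{-1}h^{-1}=[g,h]$, so $p\circ \sigma=p$, and the pushforward by $\sigma$ sends vectors tangent to fibres of $p$ to vectors tangent to fibres of $p$.
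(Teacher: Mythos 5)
Your proof is correct, and its skeleton is the same as the paper's: since $[g,h]\neq 1$ the point is regular (Lemma \ref{regularpoints}), so $\Kernel\left(\left.dp\right|_{(g,h)}\right)$ is $3$-dimensional; $eval|_{(g,h)}(\mathfrak{G}_0)$ already contains the $2$-dimensional subspace $\mathfrak g^h\oplus\mathfrak g^g$ of the kernel; and it remains to exhibit one more independent direction. Where you genuinely diverge is in how that third direction is produced. The paper stays inside $\mathfrak G_0$: it brackets the two basic fields $\bigl(0,y_1(g)\bigr)$ and $\bigl(x_2(h),0\bigr)$ using Lemma \ref{lemma:com_of_vf}, then differentiates the constraint $Ad\bigl(he^{ty_1(g)}\bigr)\bigl(x_2(he^{ty_1(g)})\bigr)=x_2(he^{ty_1(g)})$ to show the first component of the bracket escapes $\mathfrak g^h$ exactly when $[y_1(g),x_2(h)]\neq 0$, which is arranged because $g$ and $h$ do not commute. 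You instead invoke the $Ad_\sigma\mathfrak G_0$ part of the definition of $\mathfrak G$, pushing $(0,x(\cdot))$ forward by $\sigma=A_1(a)$ so that the second component becomes $x(ga(h))\in\mathfrak g^{ga(h)}$, and you use the geometric fact that in $\PSL(2,\mathbb R)$ the centralizers of two non-commuting regular elements meet only trivially, so $\mathfrak g^{ga(h)}\cap\mathfrak g^g=\{0\}$ once $a(h)\neq 1$. Both routes are sound. Yours is more geometric and bypasses the bracket formula entirely, at the cost of needing the auxiliary checks you correctly identify (choosing $a(h)\neq 1$ close to the identity so that $ga(h)$ remains in the open set $\Ell$ and is regular, and verifying $p\circ A_1(a)=p$ so the pushforward stays tangent to the fibre); the paper's route proves the marginally stronger statement that $eval|_{(g,h)}(\mathfrak G_0)$ by itself already equals the kernel, with no conjugation by elements of $A_1$ or $A_2$ required.
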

\begin{proof} Let $g,h\in \Ell$  such that $[g,h] \neq 1$.
 Recall that $\mathfrak G_0$ is the Lie algebra that
 consists of vector fields on $G \times G$ that are sums of vector fields of the form $\bigl( x(h),0\bigl), \, \bigl(0,y(g)\bigl)$, where $x, \ y$ are
 elements in $\mathfrak a$, and recall that $\mathfrak a$ consists
 of maps $x : G \rightarrow \mathfrak{g}$ that have the property $Ad_g (x(g))=x(g)$
 for every $g \, \in G$. This means that $\mathfrak g^h\oplus \mathfrak g^g \subset eval\vert_{(g,h)}(\mathfrak G_0) $. In addition,  when $\xi$ commutes with $h$ and $\eta$ commutes with $g$, $dp|_{(g,h)}=0$ and therefore $(\xi, \eta)$ belongs in the  $\Kernel (dp|_{(g,h)})$. Hence we have  the following
\begin{equation} \mathfrak g^h\oplus
\mathfrak g^g\subset \Kernel (dp\vert_{(g,h)})\subset \mathfrak
g\oplus \mathfrak g.
\end{equation}

Furthermore $(g,h)$ is regular for $p$ if and only if $[g,h]\neq 1$
by Lemma \ref{regularpoints}.  Thus, at a point $(g,h) \in \Ell
\times \Ell$  the Lie algebras $\mathfrak {g} ^h$ and $\mathfrak {g}
^g$ have dimension 1 respectively.  Therefore, by showing that the
quotient
\begin{equation}\label{quotient}eval\vert_{g,h}(\frak G)/(\frak
g^h\oplus \frak g^g)\end{equation} is not zero, we  show that
$eval|_{(g,h)}\mathfrak {G}$ has dimension 3, hence it is equal to
the $\Kernel(dp|_{(g,h)})$ and this will suffice to show that the
evaluation map is surjective.

We are going to find a non-zero element of $eval|_{(g,h)}\mathfrak
{G}$, that does not belong to $\mathfrak g^h\oplus \mathfrak g^g $.
Let $\bigl(x_1(h), y_1(g)\bigl)$,  $\bigl(x_2(h), y_2(g)\bigl)$ two
vector fields in $\mathfrak {G}$. Their commutator is given by the
following formula (Lemma \ref{lemma:com_of_vf}) \small $$
[(x_1,y_1),(x_2,y_2)]|_{(g,h)}=\bigl(
dx_2|_h(y_1(g)|_h)-dx_1|_h(y_2(g)|_h), dy_2|_g
(x_1(h)|_g)-dy_2|_g(x_2(h)|_g)\bigl)$$\normalsize We are going to
show that the commutator of two such  vector fields does not belong
to $\mathfrak g^h\oplus \mathfrak g^g $.  We could rewrite  the
first component as $$\left.\dfrac{d}{dt}\right|_{t=0} x_2(h
e^{ty_1(g)})-x_1(he^{ty_2(g)})$$From the way that the vector fields
$x_1$ and $x_2$ are defined, $h e^{ty_1(g)}$ has to commute with
$x_2(h e^{ty_1(g)})$ and  $h e^{ty_2(g)}$ has to commute with $x_1(h
e^{ty_2(g)})$. Explicitly,
\small$$\left.\dfrac{d}{dt}\right|_{t=0}Ad(h
e^{ty_1(g)})\biggl(x_2(he^{ty_1(g)})\biggl) -
\left.\dfrac{d}{dt}\right|_{t=0}Ad(h
e^{ty_2(g)})\biggl(x_1(he^{ty_2(g)})\biggl) $$ $$=
\left.\dfrac{d}{dt}\right|_{t=0}\biggl( x_2(h
e^{ty_1(g)})-x_1(he^{ty_2(g)})\biggl)$$\normalsize Calculating the
derivatives on both sides, we get \small
$$Ad(h)\biggl(\left.y_1(g)\right|_h(x_2) +[y_1(g),x_2(h)]\biggl) -
Ad(h)\biggl(\left.y_2(g)\right|_h(x_1) +[y_2(g),x_1(h)]\biggl)$$ $$
=\left.y_1(g)\right|_h(x_2)-\left.y_2(g)\right|_h(x_1)$$\normalsize
We notice that if the term
$$Ad(h)\biggl([y_1(g),x_2(h)]-[y_2(g),x_1(h)]\biggl)$$is not zero,
the expression $\bigl(\left.
dx_2\right|_h(y_1(g)|_h)-dx_1|_h(y_2(g)|_h)\bigl)$ does not belong
in $\mathfrak{g}^h$. To achieve this we can  choose $y_2=0,  \,
x_1=0 $, and since $g$ and $h$ do not commute, we can find $y_1$ and
$x_2$ so that the commutator $[y_1(g),x_2(h)]$ is not
trivial.\hfill$\square$\end{proof}

\section{Proof of Theorem \ref{actionofMCG}}\label{C:proofoftheorem}

%\begin{theorem}\ref{actionofMCG} Let $\Sigma$ be the one-holed torus
%with additional structure defined as in section 6.1, and
%$\Gamma_{\Sigma}$ the group generated by the two Dehn twists $T_1$
%and $T_2$. Let $g_c$ be the group element that corresponds to the
%boundary component,  and let $\tilde {g_c} \in \tilde{G}$ a lift of
%$g_c$ to the universal covering of $G$. Let   $g_c' \in
%\SL(2,\mathbb{R})$ the projection of $\tilde{g_c}$ onto
%$\SL(2,\mathbb{R})$ and $t= \tr g_c'$. Then
%\begin{itemize}
%\item  For $t<2$ the group $\Gamma_{\Sigma}$ acts properly discontinuously on $R_1^{-1}(\tilde{g}_c)$.
%\item For $2<t<18$ the group $\Gamma_{\Sigma}$ acts ergodically on  $R_1^{-1}(\tilde{g}_c)$ for almost every $\tilde g_c$.
%\item  For $t\geq 18$   there is an open subset  $\Omega_{\tilde{g}_c} \subset R_1^{-1}(\tilde{g}_c )$ such that the group $\Gamma_{\Sigma}$ acts properly discontinuously. On the complement of this subset, the action is ergodic for almost every $\tilde{g_c}$.
%\end{itemize}

%\end{theorem}
 %We can start the  proof of the theorem \ref{actionofMCG}:

%\begin{proof}
Let $\tilde{g_c} \in \tilde G$ that covers $g_c' \in
\SL(2,\mathbb{R})
$ and $t=\tr g_c'$. There are 3 cases to consider:

\begin{itemize} \item Suppose $t<2$. From Goldman's theorem in \cite{goldman03}, the action of $\Aut(\kappa)$ on $\kappa^{-1}(t)$ is properly discontinuous and from lemma \ref{properaction}, we obtain that the action of $\Gamma_{\Sigma}$ on  $R_1^{-1}(\tilde{g_c})$ is properly discontinuous.
\item Suppose $2<t<18$. From theorem  5.2.1 in \cite{goldman03}  and lemma \ref{lemma2<t<18},   given any pair $(g,h)\in \SL(2,\mathbb{R})\times \SL(2,\mathbb{R})$, with $[g,h]=g_c'$, we can apply an element of the mapping class group  to change it  to $(g',h')$, where $g'$ is an elliptic element. The pair $(g',h')$ projects to a pair in $G\times G$, which we will also denote by $(g',h') \in \Ell \times G $.

To use the infinitesimal transitivity  method in section
\ref{C:infinitesimaltransitivity}, we need to make also  $h'$
elliptic. To do this, we can apply $A_2 \in \mathcal{G}$ to the pair
$(g',h')$, $$A_2^{-n}(a):\ (g',h') \longrightarrow (g',h'
a(g')^n).$$ We choose a basis so that $g'$ is a rotation. Then we
can find a map $a \in A_0$  so that $a(g')$ will be a rotation of
infinite order   and for sufficiently large $n \in \mathbb{N}$, $h'
a(g')^n$ becomes elliptic. To see this we can suppose that
$G=\PSU(1,1)$,  $a(g')=\left(\begin{smallmatrix}
\lambda&0\\0&\bar{\lambda} \end{smallmatrix}\right)$ and
$h'=\left(\begin{smallmatrix} \alpha&\beta\\\bar{\beta}&\bar{\alpha}
\end{smallmatrix}\right)$. We apply  $A_2^{-1}$  $n$ times:
$$A_2^{-n}(a) :(g',h') \longrightarrow (g', h'a(g')^n).$$ Then $\tr
(h'a(g')^n)= 2 \text{Re}\bigl(\alpha \lambda^n)$, and so we can
choose $n$ so that the $|\tr (h'a(g')^n)|$ is as small as we wish
since $\{\lambda ^n\}$ will be dense in the circle.
%If $g'$ has infinite order, then we can apply a Dehn twist, sufficiently many times to change $h'$ to an elliptic element. To see this we can suppose that $G=\PSU(1,1)$,  $g'=\left(\begin{smallmatrix} \lambda&0\\0&\bar{\lambda} \end{smallmatrix}\right)$ and $h'=\left(\begin{smallmatrix} a&b\\\bar{b}&\bar{a} \end{smallmatrix}\right)$. We apply a Dehn twist $n$ times: $$T_2^{-n} :(g',h') \longrightarrow (g', h'g'^n).$$ Then $\tr (h'g'^n)= 2 Re\bigl(a \lambda^n)$, and so we can choose $n$ so that the $\tr (h'g'^n)$ is as small as we wish, since $\{\lambda ^n\}$ will be dense in the circle.

 Hence, under the action of $\Gamma_{\Sigma}$ and $\mathcal{G}$ the pair $(g,h)$ can be transformed to a pair of elliptic elements.  In addition, since  $t>2$ the elements  $g$ and $h$ do not commute, therefore the pair $(g,h)$ is regular and  from proposition \ref{inf_transitivity}  we conclude that $\mathcal{G}$ is infinitesimally transitive along the fiber $R_1^{-1}(\tilde{g_c})$.

Suppose $F$ is a Haar measurable function defined on $G\times G$
that  is $\Gamma_{\Sigma}$ invariant. By lemma \ref{lemmaT_j,A_j} it
is going to be $\mathcal{G}$ invariant. Given any point $(g,h)$ in
$G\times G$ the $\mathcal{G}$-orbit of $(g,h)$ is the whole fiber
and  $\mathcal{G}$ is infinitesimally transitive along the fiber,
thus an invariant $F$ is  constant on the fibers for almost every
$\tilde {g_c}$.

\item Suppose $t>18$.  In Goldman's setting the action of $\Aut(\kappa)$ separates $\kappa^{-1}(t)\cap \mathbb{R}^3$ in to  two regions (section 5 in \cite{goldman03}). Let $\Omega=\Aut(\kappa)\cdot \bigl(\Omega_0 \cap \kappa^{-1}(t)\bigl) \subset \kappa^{-1}(t)$, where $$\Omega_0=(-\infty,-2)\times (-\infty,-2)\times(-\infty,-2)$$ The action of $\Aut(\kappa)$ on $\Omega$   is properly discontinuous and on the complement of $\Omega$ the action is ergodic. Define the set $$\Omega'=\{(g,h) \in \SL(2,\mathbb{R}) \times \SL(2,\mathbb{R}) :  \text{ such that } [g,h]=  g_c'\}, $$ where $ \tr g_c'= t \text{ and } (\tr g, \tr h,\tr gh)\in \Omega$.

Then by lemma \ref{properaction} the action of $\Gamma_{\Sigma}$ on
$\Omega'$ is properly discontinuous.  Let $(g,h) \not\in \Omega'$
then by theorem 5.2.1 in \cite{goldman03}  and  lemma
\ref{lemma2<t<18}, we can find $\gamma \in \Gamma_{\Sigma}$ such
that $\gamma \cdot (g,h)$ is a pair of elliptic elements. These
pairs consist a set where infinitesimal transitivity holds, and by
similar argument as in the  case where $2<t<18$,  we can prove
ergodicity. \end{itemize}
 % \hfill $\square$\end{proof}

%\bibliographystyle{alpha}
%

\bibliography{bibliography}

\begin{thebibliography}{MKS70}

\bibitem[Gol]{Goldman06}
W.~Goldman.
\newblock Mapping class group dynamics on surface group representations.
\newblock to appear in {"}Problems on Mapping Class Groups and Related Topics
  {"} B. Farb, ed. Proc. of Symposia in Pure Math. Amer. Math. Soc.,
  http://www.math.umd.edu/{$\sim$}wmg/publications.html.

\bibitem[Gol03]{goldman03}
W.~Goldman.
\newblock The modular group action on real {SL}(2)-characters of a one-holed
  torus.
\newblock {\em Geometry and Topology}, 7:443--486, 2003.

\bibitem[Kon06]{mydissertation}
P.~Konstantinou.
\newblock {\em Homomorphisms of the fundamental group of a surface into
  PSU(1,1) and the action of the mapping class group}.
\newblock PhD thesis, The University of Arizona, 2006.

\bibitem[LS77]{LS}
R.~Lyndon and R.~Schupp.
\newblock {\em Combinatorial {G}roup {T}heory}.
\newblock Springer-Verlag, Berlin, Heidelberg, New York, 1977.

\bibitem[MKS70]{MKS}
W.~Magnus, A.~Karrass, and D.~Solitar.
\newblock {\em Combinatorial {G}roup {T}heory: Presentations of groups in terms
  of generators and relations}.
\newblock Dover Publications, New York, 1970.

\bibitem[Moo76]{moore}
C.~Moore.
\newblock Group extensions and cohomology for locally compact groups. {III}.
\newblock {\em Trans. {A}mer. {M}ath. {S}oc.}, 221:1--33, 1976.

\bibitem[Nie64]{nielsen}
J.~Nielsen.
\newblock Die isomorphismen der allgmeinen unendlichen gruppe mit zwei
  {E}rzeugenden.
\newblock {\em {M}ath. {A}nn.}, 78:385--397, 1964.

\bibitem[PX02]{PX}
D.~Pickrell and E.~Z. Xia.
\newblock Ergodicity of mapping class group actions on representation
  varieties, {I}. {C}losed surfaces.
\newblock {\em Comment. Math. Helv.}, 77:339--362, 2002.

\bibitem[Ran69]{rankin}
R.~Rankin.
\newblock {\em The modular group and its subgroups}.
\newblock Madras Ramanujan Institute, 1969.

\end{thebibliography}

\end{document}